\newcommand{\Addresses}{{
  \bigskip
  \footnotesize

 \textsc{Department of Mathematics, KTH Royal Institute of Technology,
  Stockholm, Sweden}\par\nopagebreak
  \textit{E-mail addresses:} \quad \texttt{persj@kth.se, jostromb@kth.se}
}}
\title{Convergence of sequences of Schr\"odinger means}
\author{Per Sj\"olin and Jan-Olov Str\"omberg}
\newcommand{\lowlim}{\mathop{\underline{\mathrm{lim}}}}
\renewcommand{\S}[0]{{\bf S}}
\newcommand{\R}[0]{{\mathbb R}}
\newcommand{\supp}[0]{\mbox{supp }}
\newcommand{\e}[0]{\mbox{e}}
\newtheorem{theorem}{Theorem}
\newtheorem{lemma}{Lemma}
\newtheorem{corollary}{Corollary}
\renewcommand{\smallskip}{}
\newcommand{\aname}{\em}
\newcommand{\jname}{}
\begin{document}
\begin{abstract}
We study convergence almost everywhere of sequences of Schr\"odinger means. We also replace sequences by uncountable sets.
\end{abstract}
\let\thefootnote\relax\footnote{\emph{Mathematics Subject Classification} (2010):42B99.\par
\emph{Key Words and phrases:} Schr\"odinger equation, convergence, Sobolev spaces   } 
\maketitle
\section{Introduction}
For  $f\in L^2(\R^n), n\ge1$ and $a>0$ we set \[
\hat f(\xi)=\int_{\R^n} \e^{-i\xi\cdot x} f(x)\,dx, \xi \in \R^n,
\]
and \[ 
S_tf(x)=(2\pi)^{-n} \int_{\R^n} \e^{i\xi\cdot x} \e^{it|\xi|^a}\hat f(\xi)\, d\xi,\quad x\in\R^n, t\ge0.
\]
 For $a=2$ and $f$ belonging to the Schwartz class
$\mathscr{S}(\R^n)$ we set $u(x,t)=S_tf(x)$. It then follows that  $u(x,0)=f(x)$ and $u$ satisfies the Schr\"odinger equation 
$i\partial u/ \partial t=\Delta u$.
\par
We introduce Sobolev spaces $H_s=H_s(\R^n)$ by setting 
\[
H_s=\{f\in \mathscr{S}^\prime; \|f\|_{H_s} <\infty \}, s\in\R,
\]
where
\[
 \|f\|_{H_s}=\left(\int_{\R^n} (1+[\xi[^2)^s |\hat f(\xi)|^2\,d\xi \right)^{1/2}.
\]
In the case $a=2$ and $n=1$ it is well-known (se Carleson \cite{Car} and Dahlberg and Kenig \cite{Dah-Ken}) that 
\begin{align}
\label{eq:limit}
\lim_{t\to0}S_t f(x)=f(x)
\end{align}
almost everywhere if $f\in H_{1/4}$. Also it is known that $H_{1/4}$ cannot be replaced by $H_s$ if
$s<1/4$.
\par
In the case $a=2$ and $n>2$ Sj\"olin \cite{Sjo87} and Vega \cite{Veg88a} proved independently that (\ref{eq:limit}) holds almost everywhere if
$f \in H_s(\R^n), s>1/2$ . This result was improved by Bourgain \cite{Bou13} who proved that $f\in H_s(\R^n), s>1/2-1/4n$, is sufficient for 
convergence almost everywhere. On the other hand Bourgain \cite{Bou16} has proved that $s\ge n/2(n+1)$ is necessary for convergence for $a=2$ and
 $n\ge2$.
 \par
In the case $n=2$ and $a=2$, Du, Guth and Li  \cite{Du-Gu-Li}  proved that the condition $s>1/3$  is sufficient. Recently Du and Zhang 
\cite{Du-Zha} proved that the condition $s>n/2(n+1)$ is sufficient for $a=2$ and $n\ge3$.
\par
In the case $a>1,n=1$, (\ref{eq:limit}) holds almost everywhere if $f\in H_{1/4}$ and $H_{1/4}$  cannot be replaced by $H_s$ if $s<1/4$. In the case
$a>1, n=2$, it is known that (\ref{eq:limit}) holds almost everywhere if $f\in H_{1/2}$ and in the case $a>1,n\ge3$ convergence has been proved for $f\in H_s$ with
$s>1/2$. For the results in the case $a>1$ see Sj\"olin \cite{Sjo87,Sjo13} and Vega \cite{Veg88a,Veg88b}.
\par
If $f\in L^2(\R^n)$ then $S_tf\to f$ in $L^2$ as $t\to0$. It follows that there exists a sequence $(t_k)_1^\infty$ satisfying 
\begin{align}\
\label{eq:tlimit}
1>t_1>t_2>t_3>\dots>0 \mbox{ and }\lim_{k\to\infty}t_k=0.
\end{align}   

such that
\begin{align}
\label{eq:seqlimit}
\lim_{k\to\infty} S_{t_k} f(x)=f(x)
\end{align}
almost everywhere.
\par
In Sj\"olin \cite{Sjo19} we studied the problem of deciding for which sequences $(t_k)_1^\infty$ one has (\ref{eq:seqlimit}) almost everywhere if $f\in H_s
$. The following result was obtained in \cite{Sjo19}.\\[.2cm]
\vspace{.5cm}
{\bf Theorem A}  {\em Assume that $n\ge1$ and $a>1$ and $s>0$. We assume that  (\ref{eq:tlimit}) holds and that $\sum_{k=1}^\infty t_k^{2s/a}<\infty$ and 
$f\in H_s(\R^n)$. Then 
\[
\lim_{k\to\infty} S_{t_k} f(x)=f(x)
\]
for almost every $x\in\R^n$.
}
\par
\vspace{.2cm}
We shall here continue the study of conditions on  sequences $(t_k)_1^\infty$ which imply that (\ref{eq:seqlimit})  holds almost everywhere. We shall also replace the set
$\{t_k; k=1,2,3,\dots\}$ with sets $E$ which are not countable, for instance the Cantor set .  Our first theorem is an extension of Therorem A in which we replace the spaces $H_s$ with Bessel  potential spaces $L_s^p$. We need some more notations.\\[.2cm]
Let $1<p\le2$  and $s>0$. Set $k_s(\xi)=(1+|\xi|)^{-s/2}$ for $\xi \in \R^n$.\\
Let the operator $\mathscr{J}_s$ be defined by
\[
\mathscr{J}_s f= \mathscr{F}^{-1} (k_s\hat f), f\in L^2\cap L^p,
\]
where $\mathscr{F}$ denotes the Fourier transformation, i.e. $\mathscr{F}f=\hat f$. Then $\mathscr{J}_s$ can be extended to a bounded operator on
$L^p$, that is $k_s\in M_p$, where $M_p$ denotes the space of Fourier multipliers on $L_p$ (see Stein \cite{Ste70}, p.132).
\par
We introduce the Bessel potential space $L_s^p$ by setting $L_s^p=\{\mathscr{J}_s g; g\in L^p \},\, s>0$.\\
We let $I$ denote an interval defined in the following way. In the case $n=1, s<a/2$, and in the case $n\ge2$, we have $I=[p_0,2]$, where
$p_0=2/(1+2s/na)$. In the remaining case $n=1, s\ge  a/2$, we have $I=(1,2]$.\label{def:interval}\\
For $f\in L^p_s, p\in I$, and $a>1$, and $0<s<a$, we shall define $S_t f$ so that \[
(S_t f)\hat{}(\xi)=\e^{it|\xi |^{a} } \hat f(\xi)
\]
and then have the following theorem.
\begin{theorem} 
{\color{black}Assume $a>1,\, 0<s<a$,} and $f\in L_s^p$, where $p\in I$. Let the sequence $(t_k)_1^\infty$ satisfy (\ref{eq:tlimit}),
  and assume also that 
$\sum_{t=1}^\infty  t_k^{ps/a}<\infty$. Then
\[
\lim_{k\to\infty} S_{t_k} f(x)=f(x)
\]
almost everywhere.
\end{theorem}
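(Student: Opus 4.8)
\noindent\emph{Strategy.}\ The plan is to reduce the theorem to one uniform Fourier‑multiplier estimate, and then to sum a convergent series. Since $f\in L_s^p$ we may write $f=\mathscr{J}_s g$ with $g\in L^p$. Then $\hat f=k_s\hat g$ with $k_s(\xi)=(1+|\xi|^2)^{-s/2}$, so, by the definition $(S_tf)\hat{}(\xi)=\e^{it|\xi|^{a}}\hat f(\xi)$, we get $S_tf-f=T_tg$ for every $t>0$, where $T_t$ is the Fourier multiplier operator with symbol
\[
m_t(\xi)=\bigl(\e^{it|\xi|^{a}}-1\bigr)(1+|\xi|^2)^{-s/2}.
\]
The crucial claim is that, for $p\in I$, there is a constant $C=C(n,a,s,p)$ with
\begin{align}
\label{eq:keymult}
\|m_t\|_{M_p}\le C\,t^{s/a},\qquad 0<t<1.
\end{align}
Granting \eqref{eq:keymult}, the boundedness of $m_t$ on $L^p$ gives $\|S_{t_k}f-f\|_{L^p}\le C\,t_k^{s/a}\|g\|_{L^p}$, whence
\[
\sum_{k=1}^{\infty}\int_{\R^n}|S_{t_k}f(x)-f(x)|^{p}\,dx
=\sum_{k=1}^{\infty}\|S_{t_k}f-f\|_{L^p}^{p}
\le C^{p}\|g\|_{L^p}^{p}\sum_{k=1}^{\infty}t_k^{ps/a}<\infty
\]
by the hypothesis $\sum_k t_k^{ps/a}<\infty$. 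By Tonelli's theorem $\sum_k|S_{t_k}f(x)-f(x)|^{p}<\infty$ for almost every $x$, so $S_{t_k}f(x)\to f(x)$ almost everywhere, which is the assertion. (The same computation, with $\e^{it|\xi|^{a}}-1$ replaced by $\e^{it|\xi|^{a}}$, also shows that $S_tf$ is a well‑defined element of $L^p$ when $f\in L_s^p$ and $p\in I$.)

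\noindent\emph{Proof of \eqref{eq:keymult}.}\ I would first exploit the scaling $\xi\mapsto t^{-1/a}\xi$. Since $M_p$ is invariant under dilations, and $\e^{it|t^{-1/a}\eta|^{a}}=\e^{i|\eta|^{a}}$ while $(1+t^{-2/a}|\eta|^2)^{-s/2}=t^{s/a}(t^{2/a}+|\eta|^2)^{-s/2}$, one obtains $\|m_t\|_{M_p}=t^{s/a}\|\mu_t\|_{M_p}$, where $\mu_t(\eta)=(\e^{i|\eta|^{a}}-1)(t^{2/a}+|\eta|^2)^{-s/2}$; thus \eqref{eq:keymult} is equivalent to $\sup_{0<t<1}\|\mu_t\|_{M_p}<\infty$ for $p\in I$. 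Fix a smooth partition of unity $1=\varphi_0+\varphi_\infty$ with $\varphi_0\equiv1$ on $\{|\eta|\le1\}$ and $\mathrm{supp}\,\varphi_0\subset\{|\eta|\le2\}$, and estimate $\varphi_0\mu_t$ and $\varphi_\infty\mu_t$ separately. For $\varphi_0\mu_t$ there is no oscillation and a H\"ormander--Mikhlin estimate works uniformly in $t$: using $0<s<a$ one has $|\eta|^{a}(t^{2/a}+|\eta|^2)^{-s/2}\le\max\{t^{1-s/a},|\eta|^{a-s}\}\le C$ on $\{|\eta|\le2\}$, and combining this with the elementary bounds $|\partial^{\beta}(\e^{i|\eta|^{a}}-1)|\le C_\beta|\eta|^{a-|\beta|}$ and $|\partial^{\gamma}(t^{2/a}+|\eta|^2)^{-s/2}|\le C_\gamma|\eta|^{-|\gamma|}(t^{2/a}+|\eta|^2)^{-s/2}$ the Leibniz rule gives $|\partial^{\alpha}(\varphi_0\mu_t)(\eta)|\le C_\alpha|\eta|^{-|\alpha|}$ uniformly in $t\in(0,1)$, so $\varphi_0\mu_t\in M_p$ for all $1<p<\infty$ with norm bounded independently of $t$. (Here the factor $\e^{i|\eta|^{a}}-1$ is essential: it cancels the low‑frequency singularity of $(t^{2/a}+|\eta|^2)^{-s/2}$ as $t\to0$.) For $\varphi_\infty\mu_t$, the factors $(t^{2/a}+|\eta|^2)^{-s/2}$ and $(1+|\eta|^2)^{-s/2}$ are comparable on $\{|\eta|\ge1\}$ together with all of their derivatives, uniformly in $t$, so by the algebra property of $M_p$ it suffices to bound the $t$‑independent symbol
\[
(\e^{i|\eta|^{a}}-1)(1+|\eta|^2)^{-s/2}=\e^{i|\eta|^{a}}(1+|\eta|^2)^{-s/2}-(1+|\eta|^2)^{-s/2}.
\]
The second term is a Mikhlin multiplier, and the first is an oscillatory Fourier multiplier of classical type (see Sj\"olin \cite{Sjo87,Sjo13} and the references given there), which for $a\neq1$ lies in $M_p$ if and only if $s\ge na|1/p-1/2|$; a direct computation shows that this is precisely the condition $p\ge p_0=2/(1+2s/na)$ when $n\ge2$, or when $n=1$ and $s<a/2$, and that it holds for every $p\in(1,2]$ when $n=1$ and $s\ge a/2$, that is, exactly when $p\in I$. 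Adding the two pieces gives $\|m_t\|_{M_p}=t^{s/a}\|\mu_t\|_{M_p}\le t^{s/a}\bigl(\|\varphi_0\mu_t\|_{M_p}+\|\varphi_\infty\mu_t\|_{M_p}\bigr)\le C\,t^{s/a}$ for $p\in I$, which is \eqref{eq:keymult}.

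\noindent\emph{Main obstacle.}\ The rescaling, the Mikhlin bounds for $\varphi_0\mu_t$ and for the Bessel factors, and the final summation are routine. The substantive part is the high‑frequency oscillatory multiplier estimate together with its sharp calibration: one has to know that $\e^{i|\eta|^{a}}(1+|\eta|^2)^{-s/2}$ belongs to $M_p$ exactly for $p\in I$, with the correct behaviour at the endpoint $p=p_0$, and that all constants are uniform in $t\in(0,1)$. This is the only step that restricts the range of $p$ --- it is precisely where the interval $I$ originates --- and it may be quoted from, or re‑derived along the lines of, the oscillatory multiplier estimates in \cite{Sjo87,Sjo13}.
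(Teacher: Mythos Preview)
Your proof is correct and follows essentially the same route as the paper: the paper isolates your key multiplier bound \eqref{eq:keymult} as its Theorem~2, proves it by the same scaling $\xi\mapsto t^{-1/a}\xi$ and the same low/high-frequency split (H\"ormander--Mikhlin for the compactly supported piece, the oscillatory multiplier $\psi(\xi)|\xi|^{-s}\e^{i|\xi|^{a}}$ for the piece at infinity), and then deduces Theorem~1 by the identical $\ell^{p}$-summation argument. The one correction is bibliographic: the sharp result that $\psi(\xi)|\xi|^{-s}\e^{i|\xi|^{a}}\in M_p$ for $|1/p-1/2|\le s/(na)$ is Miyachi's theorem \cite{Miy} (quoted in the paper as Theorem~C), not a consequence of \cite{Sjo87,Sjo13}.
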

In the proof of Theorem 1 we shall use the following theorem on Fourier multipliers.
\begin{theorem}
Let  $a>1, 0<s<a$, and assume also that $0<\delta<1$. Set \[
m({\xi})= \frac{\e^{ i \delta|\xi|^a}-1} {(1+|\xi|^2)^{s/2} }, \quad \xi\in\R^n. 
\]
Then $m\in M_p$ and \[
\|m\|_{M_p}\le C_p \delta^{s/a} \mbox{ for }p\in I,
\]
where $C_p$ does not depend on $\delta$.
\end{theorem}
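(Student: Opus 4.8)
The plan is to prove the multiplier bound by splitting $m$ into a low-frequency piece and a high-frequency piece, handling the low-frequency piece by a direct power-series argument and the high-frequency piece by a scaling reduction combined with a known oscillatory-multiplier estimate. Fix a smooth radial cutoff $\varphi$ with $\varphi(\xi)=1$ for $|\xi|\le 1$ and $\varphi(\xi)=0$ for $|\xi|\ge 2$, and write $m=m_0+m_1$ with $m_0=\varphi\, m$ and $m_1=(1-\varphi)\,m$. For the low-frequency term, on $|\xi|\le\delta^{-1/a}$ one has $|\delta|\xi|^a|\le 1$, so expanding $\e^{i\delta|\xi|^a}-1=\sum_{j\ge1}(i\delta|\xi|^a)^j/j!$ and noting $(1+|\xi|^2)^{-s/2}\le 1$ gives a series $\sum_{j\ge1}\delta^j |\xi|^{aj}/j!$ of functions; each $|\xi|^{aj}$ times the cutoff is a smooth compactly supported multiplier, but the cutoff scale depends on $\delta$. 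So instead I would rescale: setting $\xi=\delta^{-1/a}\eta$, the multiplier becomes $(\e^{i|\eta|^a}-1)(1+\delta^{-2/a}|\eta|^2)^{-s/2}\varphi(\delta^{-1/a}\eta)$, and since $M_p$ norms are dilation-invariant, it suffices to bound this in $M_p$ by $C\delta^{s/a}$. On $|\eta|\le 2\delta^{1/a}$ we have $(1+\delta^{-2/a}|\eta|^2)^{-s/2}\le \delta^{s/a}|\eta|^{-s}$ (up to constants), and $|\e^{i|\eta|^a}-1|\le C|\eta|^a$, so the symbol is $O(\delta^{s/a}|\eta|^{a-s})$ with $a-s>0$; a Mikhlin-type estimate on the derivatives of this symbol then yields the factor $\delta^{s/a}$.

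For the high-frequency term $m_1$, supported in $|\xi|\ge 1$, I would write
\[
m_1(\xi)=\bigl(1-\varphi(\xi)\bigr)\,\frac{\e^{i\delta|\xi|^a}}{(1+|\xi|^2)^{s/2}}-\bigl(1-\varphi(\xi)\bigr)\,\frac{1}{(1+|\xi|^2)^{s/2}},
\]
the second summand being a fixed $M_p$ multiplier (its $M_p$ norm is $\le C$, independent of $\delta$), but this alone gives only the bound $C$, not $C\delta^{s/a}$. The point is that $\delta^{s/a}\le 1$, so for the range $|\xi|\gtrsim \delta^{-1/a}$ one can afford the trivial bound $C$ while gaining $\delta^{s/a}$ from $(1+|\xi|^2)^{-s/2}\le |\xi|^{-s}\le \delta^{s/a}$ on that region; whereas for $1\le|\xi|\lesssim\delta^{-1/a}$ one is back in the regime $\delta|\xi|^a\lesssim 1$ and can run the rescaling argument of the previous paragraph. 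Concretely, insert a second cutoff at scale $\delta^{-1/a}$: write $m_1=m_1\psi(\delta^{1/a}\xi)+m_1(1-\psi(\delta^{1/a}\xi))$ where $\psi$ is supported in $|\cdot|\le 2$. On the far piece $(1-\psi(\delta^{1/a}\xi))$ forces $|\xi|\ge\delta^{-1/a}$, so $(1+|\xi|^2)^{-s/2}\le\delta^{s/a}$, and $\e^{i\delta|\xi|^a}-1$ is bounded with controlled derivatives after this localization (its derivatives of order $j$ are $O(\delta^j|\xi|^{aj-j})$ on $|\xi|\le$ anything, but here we need uniform-in-$\delta$ control, which the $\delta^{s/a}$ prefactor absorbs); a Mikhlin--Hörmander argument gives $M_p$ norm $\le C\delta^{s/a}$. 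On the near piece $|\xi|\lesssim\delta^{-1/a}$, rescale as before.

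The main obstacle — and the step I would expect to carry the real content — is the Mikhlin-type verification on the near/intermediate region $1\lesssim|\xi|\lesssim\delta^{-1/a}$, i.e. showing that the symbol
$\bigl(\e^{i\delta|\xi|^a}-1\bigr)(1+|\xi|^2)^{-s/2}$, cut off to $|\xi|\lesssim\delta^{-1/a}$, has $M_p$ norm $\le C\delta^{s/a}$ with $C$ independent of $\delta$, for the specified range $p\in I$. Differentiating $\e^{i\delta|\xi|^a}$ brings down powers of $\delta|\xi|^{a-1}$ which are $O(1)$ on this region only in the radial variable; the homogeneity of $|\xi|^a$ for $a$ not an even integer means derivatives blow up near $\xi=0$, but that is excised by the low-frequency cutoff. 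The clean way is to dyadically decompose $2^k\le|\xi|\le 2^{k+1}$ for $0\le k\lesssim (1/a)\log_2(1/\delta)$; on each annulus $|\e^{i\delta|\xi|^a}-1|\le C\min(1,\delta 2^{ka})$ and each $\xi$-derivative costs $2^{-k}$, while $(1+|\xi|^2)^{-s/2}\sim 2^{-ks}$, so the annulus piece has $M_p$ norm $\le C 2^{-ks}\min(1,\delta 2^{ka})$; summing the geometric-type series over $k$ (the two regimes meeting at $2^{ka}\sim\delta^{-1}$) produces exactly $\sum_k 2^{-ks}\min(1,\delta2^{ka})\le C\delta^{s/a}$ since $0<s<a$. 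One must check that the constant in the annulus estimate is uniform, which is where the restriction $a>1$ (so that $|\xi|^{a-1}$ derivatives are benign away from the origin) and $0<s<a$ enter; the multiplier theorem used on each annulus is the standard one, valid for all $1<p<\infty$, and the role of the interval $I$ is inherited from the companion oscillatory estimates rather than from this piece, so no additional restriction beyond $a>1$, $0<s<a$ is needed here.
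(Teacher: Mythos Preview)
Your decomposition and the handling of the regions $|\xi|\lesssim 1$ and $1\lesssim|\xi|\lesssim\delta^{-1/a}$ are essentially fine --- the dyadic sum $\sum_k 2^{-ks}\min(1,\delta 2^{ka})\lesssim\delta^{s/a}$ is correct and is, after rescaling, the same Mikhlin argument the paper uses on its piece $m_1$. The genuine gap is in the far piece $|\xi|\gtrsim\delta^{-1/a}$.

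There you claim that after extracting the pointwise bound $(1+|\xi|^2)^{-s/2}\le\delta^{s/a}$, the oscillatory factor $\e^{i\delta|\xi|^a}-1$ ``is bounded with controlled derivatives'' and that a Mikhlin--H\"ormander argument applies. It does not. A single $\xi$-derivative of $\e^{i\delta|\xi|^a}$ produces a factor of order $\delta|\xi|^{a-1}$, and more generally $D^\alpha\e^{i\delta|\xi|^a}$ contains a term of size $(\delta|\xi|^{a-1})^{|\alpha|}$. On the region $|\xi|\ge\delta^{-1/a}$ this quantity is $\ge 1$ and is \emph{unbounded} as $|\xi|\to\infty$, since $a>1$. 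Even after multiplying by $(1+|\xi|^2)^{-s/2}\sim|\xi|^{-s}$, the Mikhlin condition $|D^\alpha m(\xi)|\lesssim C|\xi|^{-|\alpha|}$ would require $\delta^{|\alpha|}|\xi|^{a|\alpha|-s}\lesssim C$, which fails for every $|\alpha|\ge 1$ because $a|\alpha|-s>0$. The $\delta^{s/a}$ you isolate is a pointwise bound on $m$; it does not control the derivatives, so Mikhlin cannot close the estimate on this region.

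What is actually needed on the far piece is an oscillatory multiplier theorem. After the rescaling $\xi\mapsto\delta^{-1/a}\eta$ the symbol there becomes $\delta^{s/a}\,\psi(\eta)\,\e^{i|\eta|^a}|\eta|^{-s}$ times a harmless factor, and one invokes Miyachi's result (Theorem~C in the paper) that $\psi(\eta)\,\e^{i|\eta|^a}|\eta|^{-s}\in M_p$ precisely when $|1/p-1/2|\le s/(na)$. This is exactly the step that carries the restriction $p\in I$; your closing remark that the interval $I$ is ``inherited from the companion oscillatory estimates rather than from this piece'' has it backwards. The restriction comes from here, and a purely Mikhlin-based argument, were it to succeed, would give the bound for all $1<p<\infty$, which is false.
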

We remark that in Sj\"olin \cite{Sjo19} we used Theorem 2 in the special case $p=2$.\\
 Now let the sequence  $(t_k)_1^\infty$ satisfy (\ref{eq:tlimit}) and set \[
 A_j=\{ t_k; 2^{-j-1}<t_k\le 2^{-j} \}\mbox{ for }j=1,2,3,\dots .
 \]
Let $\#A$ denote the number of elements in a set $A$. We have the following theorem.
\begin{theorem}
Assume that $n\ge1, a>1$, and $0<s\le1/2$ and $b\le2s/(a-s)$. Assume also that 
\begin{align}
\label{eq:nrA_est}
\#A_j\le C2^{bj}\mbox{ for }j=1,2,3,\dots
\end{align}
and that $f\in H_s$. Then
\[
\lim_{k\to\infty} S_{t_k} f(x)=f(x)
\]
almost everywhere. 
\end{theorem}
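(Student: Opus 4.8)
\emph{Proof of Theorem 3 --- plan.}
Since $f\in H_s\subset L^2$, the means $S_{t_k}f$ are defined unambiguously by Plancherel, so no passage to a dense subclass is needed. I would set $G_j=\sup_{t_k\in A_j}|S_{t_k}f-f|$ and $\widetilde G_J=\sup_{j\ge J}G_j$. The $\widetilde G_J$ decrease with $J$, and once one knows $\sum_{j\ge1}\|G_j\|_{L^2(\R^n)}\le C\|f\|_{H_s}$ it follows that $\|\widetilde G_J\|_{L^2}\to0$, hence $\widetilde G_J\to0$ almost everywhere; then, since $t_k\to0$ forces $|S_{t_k}f-f|\le\widetilde G_J$ for all large $k$ and every $J$, we get $\limsup_k|S_{t_k}f(x)-f(x)|\le\inf_J\widetilde G_J(x)=0$ a.e. So the whole theorem reduces to the estimate $\sum_j\|G_j\|_{L^2}\le C\|f\|_{H_s}$.

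To prove that estimate I would decompose $f=\sum_{l\ge0}f_l$ into Littlewood--Paley pieces, $\hat f_l$ supported in $\{2^{l-1}\le|\xi|<2^l\}$ (in $\{|\xi|<1\}$ for $l=0$), so that $\sum_l2^{2ls}\|f_l\|_{L^2}^2\le C\|f\|_{H_s}^2$, and bound $\|G_j\|_{L^2}\le\sum_l\|\sup_{t_k\in A_j}|S_{t_k}f_l-f_l|\|_{L^2}$. For one piece, Plancherel (essentially the $p=2$ case of Theorem~2) gives $\|S_tf_l-S_{t'}f_l\|_{L^2}\le C\min(|t-t'|2^{la},1)\|f_l\|_{L^2}$. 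Since $A_j\cup\{2^{-j}\}$ has at most $\#A_j+1\le C2^{bj}+1$ points in an interval of length $2^{-j-1}$, a chaining argument along nested dyadic nets of that interval --- at level $i$ there are $\lesssim2^i$ nodes ($0\le i$, $2^i\le\#A_j+1$), consecutive nodes differ by $\lesssim2^{-j-i}$ in $t$, and $2^{-j}$ is the base node --- yields
\[
\Big\|\sup_{t_k\in A_j}|S_{t_k}f_l-f_l|\Big\|_{L^2}\le C\sum_{i\ge0,\ 2^i\le\#A_j+1}2^{i/2}\min\big(2^{la-j-i},1\big)\,\|f_l\|_{L^2}.
\]
Summing the geometric series, the coefficient of $\|f_l\|_{L^2}$ is $\lesssim2^{la-j}$ for $2^l\le2^{j/a}$, $\lesssim2^{(la-j)/2}$ for $2^{j/a}\le2^l\le2^{(b+1)j/a}$, and $\lesssim2^{bj/2}$ for $2^l\ge2^{(b+1)j/a}$ --- the first range being just the crude continuous bound $\int_0^{2^{-j}}\|\partial_tS_tf_l\|_{L^2}\,dt$, which is insensitive to $\#A_j$.

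With $\|f_l\|_{L^2}=2^{-ls}d_l$ and $\|(d_l)\|_{\ell^2}\le C\|f\|_{H_s}$, I would now sum over $l$ and then over $j$, doing the $j$-sum first in each of the three frequency ranges (each is a geometric series). This produces $\sum_j\|G_j\|_{L^2}\le C\sum_l\big(2^{l(\frac{ab}{2(b+1)}-s)}+2^{-ls}\big)d_l$, which by Cauchy--Schwarz is $\le C'\|f\|_{H_s}$ as soon as $\frac{ab}{2(b+1)}<s$, i.e. $b<\frac{2s}{a-2s}$. Because $a>1\ge2s$ we have $a>2s$, hence $\frac{2s}{a-s}<\frac{2s}{a-2s}$, so the hypothesis $b\le\frac{2s}{a-s}$ is more than enough. (The restriction $s\le\tfrac12$ enters only through $a>2s$; for $s>\tfrac12$ the single-time limit $\lim_{t\to0}S_tf=f$ already holds a.e.\ by the results quoted in the introduction, so any sequence works.)

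The one substantive step is the per-block maximal estimate. Replacing $\sup_{t_k\in A_j}|\cdot|$ by $(\#A_j)^{1/2}\max_k\|\cdot\|_{L^2}$ at every frequency --- the obvious thing --- would only give $b<2s/a$. The improvement to $2s/(a-s)$ comes from observing that on the low frequencies $2^l\lesssim2^{j/a}$ the curve $t\mapsto S_tf_l$ has negligible total variation over $(0,2^{-j})$, so the cardinality of $A_j$ costs nothing there, while the chaining interpolates between this and the trivial bound on the intermediate frequencies. Getting the three frequency contributions to balance, not any one of the individual bounds, is where the difficulty lies.
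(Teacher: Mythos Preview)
Your argument is correct and in fact proves more than Theorem~3 as stated: the condition you extract, $b<2s/(a-2s)$, is precisely the improved range the paper obtains only later as Corollary~6, via the covering-number machinery of Theorem~4 and Lemma~6. The two routes are genuinely different. The paper's proof of Theorem~3 fixes $j$, makes a \emph{single} frequency cut at $|\xi|=2^{jb/(2s)}$, augments $A_j$ to an almost equally spaced set $\tilde A_j$, and then, on the low-frequency part, telescopes once along $\tilde A_j$ and applies Cauchy--Schwarz (paying the full factor $\#\tilde A_j\approx 2^{bj}$), while the high-frequency part is bounded trivially by Plancherel; the cut is chosen so that low and high contributions balance, which happens exactly at $b=2s/(a-s)$ and yields the $\ell^2_j$-squared estimate $\int\sum_j G_j^2\lesssim\|f\|_{H_s}^2$. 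You instead run a full Littlewood--Paley decomposition of $f$ and a dyadic chaining inside each $A_j$, obtaining the three-regime per-block bound and an $\ell^1_j$ estimate $\sum_j\|G_j\|_2\lesssim\|f\|_{H_s}$. The paper's method is shorter and entirely elementary; your chaining interpolates more finely between the continuous-variation bound (which ignores $\#A_j$) and the trivial $(\#A_j)^{1/2}$ bound, which is why it reaches the larger threshold $2s/(a-2s)$ directly and skips the detour through Theorem~4.
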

Theorem 3 has the following two corollaries.
\begin{corollary}
Assume that  $(t_k)_1^\infty$  satisfies (\ref{eq:tlimit})
and that $n\ge1, a>1, 0<s\le1/2$, and that 
$\sum_{t=1}^\infty  t_k^\gamma<\infty$, where $\gamma=2s/(a-s)$. If also $f\in H_s$ then (\ref{eq:seqlimit}) holds almost everywhere.
\end{corollary}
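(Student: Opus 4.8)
The plan is to deduce Corollary 1 directly from Theorem 3, by showing that the summability hypothesis $\sum_{k=1}^\infty t_k^\gamma<\infty$ with $\gamma=2s/(a-s)$ forces the counting estimate (\ref{eq:nrA_est}) to hold with exponent $b=\gamma$.

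First I would fix an integer $j\ge1$ and note that every $t_k$ lying in $A_j$ satisfies $t_k>2^{-j-1}$, hence $t_k^\gamma>2^{-(j+1)\gamma}$. Since $t_k\to0$, only finitely many indices $k$ have $t_k\in A_j$, and summing the previous inequality over exactly those indices gives
\[
(\#A_j)\,2^{-(j+1)\gamma}<\sum_{t_k\in A_j}t_k^\gamma\le\sum_{k=1}^\infty t_k^\gamma=:M<\infty .
\]
Rearranging yields $\#A_j<M\,2^{\gamma}\,2^{\gamma j}=C\,2^{\gamma j}$ with $C=M2^{\gamma}$ independent of $j$, which is precisely (\ref{eq:nrA_est}) with $b=\gamma=2s/(a-s)$.

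Now the hypotheses $n\ge1$, $a>1$, $0<s\le1/2$ are assumed in the corollary, and the value $b=\gamma$ just obtained satisfies $b\le 2s/(a-s)$ (indeed with equality), so all the assumptions of Theorem 3 are met for our $f\in H_s$. Applying Theorem 3 then gives $\lim_{k\to\infty}S_{t_k}f(x)=f(x)$ for almost every $x$, i.e. (\ref{eq:seqlimit}). I expect no real obstacle in this argument: the only thing to check is that $\gamma$ is an admissible choice of the parameter $b$ in Theorem 3, which is immediate from the form of that hypothesis, while the passage from summability to the bound on $\#A_j$ is just the elementary observation that a convergent series of positive terms can contain only boundedly many terms of any prescribed dyadic magnitude.
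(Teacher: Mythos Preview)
Your proof is correct and follows essentially the same route as the paper: bound $\#A_j$ by using that each $t_k\in A_j$ contributes at least $2^{-(j+1)\gamma}$ to the convergent series $\sum t_k^\gamma$, then invoke Theorem~3 with $b=\gamma=2s/(a-s)$. The paper phrases the counting slightly differently, bounding the larger set $\{k:t_k>2^{-j-1}\}$, but the argument is the same.
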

We remark that Corollary 1 gives an improvement of Theorem A.
\begin{corollary}
Assume that  $(t_k)_1^\infty$ statisfies (\ref{eq:tlimit}),
and that $n\ge1, \, a>1,\,1<p<2,\,r>0,\,$ and \[
s=\frac n2+r-\frac np.
\]
If $f\in L^p_r$ and $s>1/2$ then (\ref{eq:seqlimit}),
 holds almost  everywhere.\\
If $0<s\le1/2$ set $\gamma=2s/(a-s)$. If also $\sum_{t=1}^\infty  t_k^\gamma<\infty$, and $f\in L^p_r$ then (\ref{eq:seqlimit})
holds almost everywhere.
 \end{corollary}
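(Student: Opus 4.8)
The plan is to deduce Corollary 2 from the Sobolev embedding theorem for Bessel potential spaces together with the results already established. The first step is to recall that for $1<p\le 2$ and $s=\frac n2+r-\frac np$ one has the continuous inclusion $L^p_r(\R^n)\subset H_s(\R^n)$. This is the classical fractional Sobolev (Hardy--Littlewood--Sobolev) embedding (see e.g. Stein \cite{Ste70}); it applies precisely because $p\le 2$, so that the balance relation $r-n/p=s-n/2$ holds with target exponent $s\le r$. Consequently, under the hypotheses of the corollary, $f\in L^p_r$ already implies $f\in H_s$, and it remains only to quote the appropriate convergence statement for $H_s$.

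Second, consider the case $s>1/2$. Here I would invoke the unconditional almost everywhere convergence $\lim_{t\to0}S_tf(x)=f(x)$, valid for every $f\in H_s$ with $s>1/2$ and $a>1$: by the results recalled in the Introduction, for $a>1$ this holds with $s\ge1/4$ when $n=1$, with $s\ge1/2$ when $n=2$, and with $s>1/2$ when $n\ge3$, so $s>1/2$ suffices in every dimension. Thus (\ref{eq:limit}) holds almost everywhere for our $f$, and in particular the limit along any sequence $t_k\to0$ exists and equals $f$ almost everywhere, which is (\ref{eq:seqlimit}). No summability hypothesis on $(t_k)$ enters in this case.

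Third, consider the case $0<s\le1/2$, and set $\gamma=2s/(a-s)$, which is well defined and positive since $a>1$ and $s\le1/2$. Now the assumptions $\sum_{k=1}^\infty t_k^\gamma<\infty$ and $f\in H_s$ (the latter coming from the embedding in the first step), together with the standing assumption (\ref{eq:tlimit}), are precisely the hypotheses of Corollary 1. Applying Corollary 1 immediately gives (\ref{eq:seqlimit}) almost everywhere, which finishes the proof.

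Since the whole argument is just the composition of a standard embedding with Corollary 1 and with the classical pointwise theorems, I do not expect a genuine analytic obstacle. The one point requiring care is the bookkeeping with exponents: one must check that the number $s=\frac n2+r-\frac np$ is exactly the Sobolev exponent for which $L^p_r\hookrightarrow H_s$, so that the dichotomy $s>1/2$ versus $0<s\le1/2$ in the statement translates correctly into the hypotheses of the results being cited; one should also note that $1<p<2$ forces $s<r$, so the regularity loss in passing from $L^p_r$ to $H_s$ is real but harmless here.
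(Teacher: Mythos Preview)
Your proof is correct and follows essentially the same approach as the paper: establish the embedding $L^p_r\subset H_s$ via Hardy--Littlewood--Sobolev (the paper writes $f=\mathscr{J}_s(\mathscr{J}_{r-s}g)$ and notes $\tfrac12=\tfrac1p-\tfrac{r-s}{n}$), then invoke the $H_s$ results. If anything you are slightly more explicit than the paper, which simply says ``the corollary then follows from Theorem 3'' without separating the $s>1/2$ case; your handling of that case via the classical pointwise theorems from the Introduction is exactly what is intended.
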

 \vspace{.3cm}
 Now let $E$ denote a bounded set in $\R$ . For $r>0$ we let $N_E(r)$ denote the minimal number $N$ of intervals $I_l, l=1,2,\dots,N$, of length
 $r$, such that $E\subset \bigcup_1^N I_l$.\\
 For $f\in\mathscr{S} $ we introduce the maximal function \[
S^*f(x)=\sup_{t\in E}\,|S_t f(x)|\,,\quad x\in\R^n.
  \]
  We shall prove the following estimate.
  \begin{theorem} 
  Assume $n\ge1, a>0$,  and $s>0$. If $f\in\mathscr{S}$ then one has \[
  \int |S^*f(x)|^2\,dx\le C\left( \sum_{m=0}^\infty N_E( 2^{-m}) \,2^{-2ms/a} \right) \|f\|_{H_s}^2\,.
  \]
  \end{theorem}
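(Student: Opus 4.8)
The plan is a Littlewood--Paley decomposition in the frequency variable, handling each dyadic piece at the time scale dictated by its frequency, and then a weighted Cauchy--Schwarz recombination. Write $\hat f=\sum_{j\ge0}\hat f_j$, where $\hat f_0=\hat f\,\mathbf{1}_{\{|\xi|\le1\}}$ and $\hat f_j=\hat f\,\mathbf{1}_{\{2^{j-1}<|\xi|\le2^j\}}$ for $j\ge1$. Since $f\in\mathscr S$, each $\hat f_j$ is compactly supported, the series $S_tf=\sum_jS_tf_j$ converges absolutely and uniformly in $(x,t)$, and setting $g_j(x):=\sup_{t\in E}|S_tf_j(x)|$ (a countable supremum of continuous functions, after restricting to a dense subset of $\overline{E}$, hence measurable) we have $S^*f\le\sum_{j\ge0}g_j$. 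We record two facts, both immediate from Plancherel: $\|S_cf_j\|_{L^2}=\|f_j\|_{L^2}$ for every $c\in\R$, and, since $\hat f_j$ is supported in $\{|\xi|\le2^j\}$, $\int|\partial_tS_tf_j(x)|^2\,dx=(2\pi)^{-n}\int|\xi|^{2a}|\hat f_j(\xi)|^2\,d\xi\le2^{2ja}\|f_j\|_{L^2}^2$ uniformly in $t$.

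The heart of the proof is the single-piece bound $\int|g_j(x)|^2\,dx\le C\,N_E(2^{-ja})\,\|f_j\|_{L^2}^2$. To obtain it, put $r=2^{-ja}$ and cover $E$ by $N=N_E(r)$ intervals $I_1,\dots,I_N$ of length $r$, fixing $c_l\in I_l$. For $t\in E\cap I_l$ the fundamental theorem of calculus in the time variable together with Cauchy--Schwarz give
\[
|S_tf_j(x)|^2\le 2|S_{c_l}f_j(x)|^2+2r\int_{I_l}|\partial_\tau S_\tau f_j(x)|^2\,d\tau .
\]
Taking the supremum over $t\in E$ (hence a sum over $l$), integrating in $x$, and invoking the two facts above yields $\int|g_j|^2\le(2+2\,r^2 2^{2ja})N\|f_j\|_{L^2}^2$, and the choice $r=2^{-ja}$ makes $r^2 2^{2ja}=1$. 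Pairing frequency $2^j$ with exactly the time scale $2^{-ja}$ is the crucial point: finer scales are wasteful, coarser ones lose the oscillation of $\e^{it|\xi|^a}$, and this balance is what makes the constant independent of $j$, $E$ and $f$.

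To recombine, we may assume the right-hand side of the theorem is finite; since $E$ is bounded, every $N_E(2^{-ja})$ is finite, so we may set $\alpha_j=2^{2js}/N_E(2^{-ja})>0$. Cauchy--Schwarz gives $\bigl(\sum_jg_j\bigr)^2\le\bigl(\sum_j\alpha_j^{-1}\bigr)\bigl(\sum_j\alpha_jg_j^2\bigr)$, so integrating and inserting the single-piece bound,
\[
\int|S^*f(x)|^2\,dx\le\Bigl(\sum_{j\ge0}N_E(2^{-ja})2^{-2js}\Bigr)\,C\sum_{j\ge0}2^{2js}\|f_j\|_{L^2}^2 .
\]
The last sum is $\le C\|f\|_{H_s}^2$ because the $\hat f_j$ have disjoint supports and $(1+|\xi|^2)^s\ge c\,2^{2js}$ on the support of $\hat f_j$. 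For the first factor, set $m_j=\lceil ja\rceil$: then $2^{-m_j}\le2^{-ja}$, so $N_E(2^{-ja})\le N_E(2^{-m_j})$ by monotonicity of $N_E$, while $m_j<ja+1$ gives $2^{-2js}\le 2^{2s/a}2^{-2m_js/a}$; hence $N_E(2^{-ja})2^{-2js}\le2^{2s/a}N_E(2^{-m_j})2^{-2m_js/a}$, and since $j\mapsto m_j$ attains each integer value at most $\lceil1/a\rceil+1$ times, $\sum_{j\ge0}N_E(2^{-ja})2^{-2js}\le C_{a,s}\sum_{m\ge0}N_E(2^{-m})2^{-2ms/a}$. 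Combining the two estimates gives the theorem.

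There is no deep obstacle here: the single substantive idea is the scale-matching in the middle step, which both forces the Littlewood--Paley splitting and makes that step scale-invariant. The remaining point --- passing from the ``sampled'' sum $\sum_jN_E(2^{-ja})2^{-2js}$ to the full dyadic sum $\sum_mN_E(2^{-m})2^{-2ms/a}$ in the statement --- is routine once one notices that $N_E$ is monotone, and is where the precise form of the right-hand side comes from.
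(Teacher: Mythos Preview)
Your proof is correct and follows essentially the same architecture as the paper: Littlewood--Paley decomposition, a single-piece bound $\int\sup_{t\in E}|S_tf_j|^2\,dx\le C\,N_E(2^{-ja})\|f_j\|_2^2$ obtained by covering $E$ at scale $2^{-ja}$, weighted Cauchy--Schwarz recombination with weights $N_E(2^{-ja})2^{-2js}$, and the final conversion from the sum over $j$ to the sum over $m$. The one technical difference is in the single-piece bound: the paper first proves the oscillation estimate for finite subsets of an interval (via telescoping and Cauchy--Schwarz) and then passes to the full interval by a discretization plus Fatou's lemma, whereas you use the fundamental theorem of calculus in $t$ directly; your route is slightly more streamlined but otherwise equivalent.
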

The following corollary  follows directly
\begin{corollary}
Assume that $n\ge1, a>0, s>0, \,f\in \mathscr{S}$, and 
\begin{align}
\label{eq:densitycondition}
\sum_{m=0}^\infty N_E( 2^{-m}) \,2^{-2ms/a}<\infty.
\end{align}
Then one has
 \[
  \left(\int |S^*f(x)|^2\,dx\right)^{1/2}\le C \|f\|_{H_s}.
  \]
\end{corollary}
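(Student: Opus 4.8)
The corollary is immediate from Theorem~4: under \eqref{eq:densitycondition} the series in the bracket there is a finite constant $C_0$, so Theorem~4 gives $\int|S^*f(x)|^2\,dx\le C\,C_0\,\|f\|_{H_s}^2$, and taking square roots yields the assertion. Hence all the content lies in Theorem~4, and the plan I describe is really a plan for that estimate. First I would decompose $f$ into frequency annuli: write $\hat f=\sum_{j\ge0}\widehat{f_j}$, where $\widehat{f_0}$ is the restriction of $\hat f$ to $\{|\xi|<1\}$ and $\widehat{f_j}$ its restriction to $\{2^{j-1}\le|\xi|<2^j\}$ for $j\ge1$. Since $f\in\mathscr S$ there are no convergence issues, $S_tf=\sum_j S_tf_j$ pointwise, and so $S^*f\le\sum_{j\ge0}S^*_jf$ with $S^*_jf(x)=\sup_{t\in E}|S_tf_j(x)|$. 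By Plancherel $\sum_j\|f_j\|_2^2=\|f\|_2^2$, and since $(1+|\xi|^2)^s\sim 2^{2js}$ on the $j$-th annulus one has $\|f\|_{H_s}^2\sim\sum_j 2^{2js}\|f_j\|_2^2$ (the weight being $1$ for $j=0$).

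The core is an $L^2$ maximal estimate on a single annulus, and the key point is that on the support of $\widehat{f_j}$ the phase $e^{it|\xi|^a}$ oscillates only on the time scale $|\xi|^{-a}\sim 2^{-aj}$. So I fix $j$, cover $E$ by $N_E(2^{-aj})$ intervals $I_l$ of length $2^{-aj}$, and use the elementary Sobolev inequality in $t$: for $\phi\in C^1$ on an interval $I$ of length $\ell$, $\sup_I|\phi|^2\le\ell^{-1}\int_I|\phi|^2\,dt+2\big(\int_I|\phi|^2\,dt\big)^{1/2}\big(\int_I|\phi'|^2\,dt\big)^{1/2}$. Applying this with $\phi(t)=S_tf_j(x)$ and $\ell=2^{-aj}$, integrating in $x$, handling the cross term by Cauchy--Schwarz in the $x$-integral, and inserting $\|S_tf_j\|_2=\|f_j\|_2$ and $\|\partial_tS_tf_j\|_2\le 2^{aj}\|f_j\|_2$ (both by Plancherel, the latter because $|\xi|\le 2^j$ on the support), one gets $\int\sup_{t\in I_l}|S_tf_j(x)|^2\,dx\lesssim_a\|f_j\|_2^2$. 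Summing over $l$, $\|S^*_jf\|_2^2\lesssim_a N_E(2^{-aj})\|f_j\|_2^2$, hence $\|S^*_jf\|_2\lesssim N_E(2^{-aj})^{1/2}\,2^{-js}\,\|f_j\|_{H_s}$ (the factor $2^{-js}$ read as $1$ for $j=0$).

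To finish I would sum in $j$ and change the dyadic scale. By the triangle inequality, Cauchy--Schwarz in $j$, and the orthogonality from the first step, $\|S^*f\|_2\le\sum_j\|S^*_jf\|_2\lesssim\big(\sum_j N_E(2^{-aj})2^{-2js}\big)^{1/2}\big(\sum_j\|f_j\|_{H_s}^2\big)^{1/2}\lesssim\big(\sum_j N_E(2^{-aj})2^{-2js}\big)^{1/2}\|f\|_{H_s}$. Now set $\mu=\lceil aj\rceil$. Since $2^{-\mu}\le 2^{-aj}$ we have $N_E(2^{-aj})\le N_E(2^{-\mu})$, and since $\mu<aj+1$ we have $2^{-2js}\le 2^{2s/a}\,2^{-2\mu s/a}$; as $j\mapsto\lceil aj\rceil$ on $\{0,1,2,\dots\}$ has multiplicity at most $\lceil1/a\rceil$ (this is where $a>0$ is used), it follows that $\sum_j N_E(2^{-aj})2^{-2js}\le C_{a,s}\sum_{m\ge0}N_E(2^{-m})2^{-2ms/a}$. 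Combining, $\int|S^*f|^2\,dx\le C\big(\sum_m N_E(2^{-m})2^{-2ms/a}\big)\|f\|_{H_s}^2$, which is Theorem~4 (and the corollary, as already noted). If the series diverges the inequality is empty; note $N_E(r)<\infty$ for every $r>0$ since $E$ is bounded.

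The delicate point should be the single-annulus estimate: one must take the supremum in $t$ \emph{before} integrating in $x$, apply the Sobolev inequality on each of the $N_E(2^{-aj})$ covering intervals separately and add the resulting bounds, and dispose of the cross term by Cauchy--Schwarz in the $x$-variable (not pointwise in $x$). The remainder is bookkeeping; in particular no upper bound on $s$ is required, because on the $j$-th annulus the quantity that enters is simply $\|\partial_tS_tf_j\|_2\sim 2^{aj}\|f_j\|_2$ rather than a fractional-order multiplier estimate.
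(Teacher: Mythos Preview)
Your argument is correct and follows the same overall architecture as the paper: Littlewood--Paley decomposition of $f$ into annuli, a per-annulus maximal bound $\|S^*_jf\|_2^2\lesssim N_E(2^{-aj})\|f_j\|_2^2$, Cauchy--Schwarz in $j$ with weights $g_j=N_E(2^{-aj})2^{-2js}$, and finally the change of dyadic scale from $aj$ to $m$. The paper's Lemma~5 and the proof of Theorem~4 carry out exactly these steps.

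The one tactical difference is in the per-annulus step. The paper (Lemmas~3 and~4) controls $\sup_{t\in I}|S_tf_j|$ on an interval $I$ of length $r$ by discretizing $I$, telescoping, applying Cauchy--Schwarz to the telescoping sum, and passing to the limit via Fatou; this gives $\int\sup_{t\in I}|S_tf_j-f_j|^2\,dx\le C(r^2A^{2a}+1)\|f_j\|_2^2$, which with $r=2^{-aj}$, $A=2^j$ is $C\|f_j\|_2^2$. You instead use the one-variable Sobolev inequality $\sup_I|\phi|^2\le \ell^{-1}\int_I|\phi|^2+2\big(\int_I|\phi|^2\big)^{1/2}\big(\int_I|\phi'|^2\big)^{1/2}$ together with $\|\partial_t S_tf_j\|_2\le 2^{aj}\|f_j\|_2$. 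Both arguments are short and yield the identical bound; yours is slightly more direct in that it avoids the discretization and Fatou step, while the paper's telescoping version has the mild advantage of never differentiating in $t$ (only finite differences appear), which is immaterial here since $f\in\mathscr S$. Your handling of the scale change via $\mu=\lceil aj\rceil$ with the multiplicity bound $\lceil 1/a\rceil$ is exactly the content of the paper's observation that for each $m$ only boundedly many $k$ satisfy $2^{-m-1}<2^{-ka}\le 2^{-m}$.
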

\vspace{.2cm}
Now let $E=\{t_k, k=1.2,3,\dots\}$ where the sequence  $(t_k)_1^\infty$ satisfies (\ref{eq:tlimit}).
We define $S^*f$ as above so that
\[
S^*f(x)=\sup_k \,|S_{t_k} f(x)|\,,\quad f\in\mathscr{S}.
 \]
 We then have the following corollary.
 \begin{corollary}
 We let $n\ge1,\,a>0, \,s>0,$ and assume that \[
 \sum_{m=0}^\infty N_E( 2^{-m}) \,2^{-2ms/a}<\infty,
 \]
 and $f\in H_s$. Then (\ref{eq:seqlimit}) holds almost everywhere.
 \end{corollary}
 \vspace{.2cm}
 Now assume $0<\kappa<1$ and that let $m_\kappa$ denote $\kappa$-dimensional Hausdorff measure on $\R$ (see
 Mattila \cite{Mat}, p.55). Let $E\subset \R$ be a Borel set with Hausdorff dimension $\kappa$ and $0<m_\kappa(E)<\infty$.
 Assume also that $0\in E$.\\
 We shall use a precise definition of $S_tf(x)$ for $f\in L^2(\R^n)$ and $(x,t)\in \R^n\times E$. Let $Q$ denote the unit cube
 $[-\frac12,\frac12 ]^n$ in $\R^n$.  Set \[
 f_N(x,t)=(2\pi)^{-n}\int_{NQ} \e^{i\xi\cdot x} \e^{it|\xi|^a}\hat f(\xi)\,d\xi,\mbox{ for } (x,t)\in\R^n\times E
 \]
 and $N=1,2,3,\dots$. It follows from well-known estimates (See Sj\"olin \cite{Sjo71} ) that there exists a set $F\subset \R^n\times E$
 with $m\times m_\kappa((\R^n\times E)\setminus F)=0$ such that
 \[ \lim_{N\rightarrow\infty} f_N(x,t)\]
 exists for every $(x,t)\in F$. Here $m$ denotes Lebesque measure. We set $S_t f(x)$ equal to this limit for
 $(x,t)\in F$  and $S_t f(x)$ will then be a measurable function on $\R^n\times E$ with respect to the measure 
 $m\times m_\kappa$\\
 Then one has the following convergence result
 \begin{theorem}
Let $n\ge1, a>0$, and assume that $s>0$ and 
\begin{align}
\label{eq:Holdersum}
\sum_{m=0}^\infty N_E( 2^{-m}) \,2^{-2ms/a}<\infty
\end{align}
and $f\in H_s$. Then for almost every $x\in\R^n$ we can modify $S_t f(x)$ on a $m_\kappa$ - nullset so that\[
\lim_{\overset{t\rightarrow0}{ t\in E}} S_t f(x) =f(x).
\]
\end{theorem}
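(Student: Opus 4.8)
The plan is to derive Theorem 6 from the maximal estimate of Theorem 5 (equivalently Corollary 3) by the standard density argument; all of the analysis is already contained in Theorem 5, and the only genuine work is measure-theoretic bookkeeping, made necessary by the fact that for general $f\in H_s$ the function $S_tf(x)$ is a priori defined only for $m\times m_\kappa$-almost every $(x,t)$.

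\emph{Step 1: a maximal function for $f\in H_s$.} Write $C_E^2=C\sum_{m\ge0}N_E(2^{-m})2^{-2ms/a}$, which is finite by \eqref{eq:Holdersum}. For $g\in\mathscr S$ we have $\hat g\in L^1$, so dominated convergence gives $f_N(x,t)\to S_tg(x)$ for every $(x,t)$; hence for Schwartz data the precise definition of $S_tg$ coincides with the classical one everywhere, and Theorem 5 reads $\|\sup_{t\in E}|S_tg|\|_2\le C_E\|g\|_{H_s}$. Now fix $f\in H_s$ and choose $g_j\in\mathscr S$ with $g_j\to f$ in $H_s$ and $\sum_j\|g_{j+1}-g_j\|_{H_s}<\infty$. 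Since each $S_t$ is an isometry of $L^2(\R^n)$ (its multiplier $\e^{it|\xi|^a}$ has modulus one), $\|S_t\varphi\|_{L^2(dx)}=\|\varphi\|_{L^2}$ independently of $t$; together with $\|\varphi\|_{L^2}\le\|\varphi\|_{H_s}$ this shows $S_tg_j\to S_tf$ in $L^2(\R^n\times E,\,m\times m_\kappa)$, and after a further passage to a subsequence we may assume this convergence holds $m\times m_\kappa$-a.e. On the other hand, by the subadditivity $\bigl|\sup_t|S_tg_{j+1}|-\sup_t|S_tg_j|\bigr|\le\sup_t|S_t(g_{j+1}-g_j)|$ and the Schwartz estimate, $\sum_j\bigl\|\sup_{t\in E}|S_t(g_{j+1}-g_j)|\bigr\|_2\le C_E\sum_j\|g_{j+1}-g_j\|_{H_s}<\infty$, so for $m$-a.e.\ $x$ the continuous functions $t\mapsto S_tg_j(x)$ converge uniformly on $E$; denote the limit by $\widetilde S_tf(x)$. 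Then $t\mapsto\widetilde S_tf(x)$ is continuous on $E$; one has $\sup_{t\in E}|\widetilde S_tf(x)|\le S^*f(x)$ with $S^*f\in L^2$ and $\|S^*f\|_2\le C_E\|f\|_{H_s}$; by the $m\times m_\kappa$-a.e.\ convergence above, $\widetilde S_tf(x)=S_tf(x)$ for $m_\kappa$-a.e.\ $t$, for $m$-a.e.\ $x$; and, carrying along compatible subsequences, the assignment $f\mapsto\widetilde S_tf$ respects the splitting $\widetilde S_tf=S_tg_j+\widetilde S_t(f-g_j)$ with $\widetilde S_tg_j=S_tg_j$ the classical Schr\"odinger means.

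\emph{Step 2: convergence.} For $g\in\mathscr S$, dominated convergence gives $S_tg(x)\to g(x)$ as $t\to0$ for \emph{every} $x$. Hence, for $m$-a.e.\ $x$,
\[
\limsup_{\substack{t\to0\\ t\in E}}\bigl|\widetilde S_tf(x)-f(x)\bigr|\le\sup_{t\in E}\bigl|\widetilde S_t(f-g_j)(x)\bigr|+|f(x)-g_j(x)|,
\]
and by Chebyshev's inequality, Step 1, and $\|f-g_j\|_{L^2}\le\|f-g_j\|_{H_s}$ (valid since $s>0$),
\[
m\Bigl\{x:\ \limsup_{t\to0,\,t\in E}|\widetilde S_tf(x)-f(x)|>\lambda\Bigr\}\le\frac{4}{\lambda^2}\bigl(C_E^2+1\bigr)\|f-g_j\|_{H_s}^2
\]
for every $\lambda>0$. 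Letting $j\to\infty$ and then $\lambda\downarrow0$ yields $\lim_{t\to0,\,t\in E}\widetilde S_tf(x)=f(x)$ for $m$-a.e.\ $x$. Since, for $m$-a.e.\ $x$, $\widetilde S_tf(x)$ differs from $S_tf(x)$ only on an $m_\kappa$-null set of $t$'s, replacing $S_tf(x)$ by $\widetilde S_tf(x)$ is exactly the admissible modification, and the conclusion follows.

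\emph{Main obstacle.} The only point that requires care is Step 1: producing a single version of $S_tf$ that is at once (i) dominated by a measurable function $S^*f\in L^2$, (ii) continuous in $t$, so that an ``$m_\kappa$-a.e.\ $t$'' statement can be promoted to a genuine limit as $t\to0$, and (iii) compatible both with forming differences $f-g_j$ and with the $f_N$-definition of $S_tf$. The device that makes this possible is the $t$-independence of $\|S_t\varphi\|_{L^2(dx)}$, which upgrades the one-parameter $L^2$-boundedness of $S_t$ into a statement in $L^2(\R^n\times E,\,m\times m_\kappa)$ and then lets Fubini together with subsequence extraction supply the required version.
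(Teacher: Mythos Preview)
Your proof is correct and follows essentially the same route as the paper: use the maximal estimate of Corollary 3 for Schwartz functions, approximate $f\in H_s$ by smooth $g_j$ with summable $H_s$-errors so that $S_tg_j(x)$ converges uniformly in $t\in E$ for a.e.\ $x$ to a continuous function $u_x(t)$ (your $\widetilde S_tf(x)$), and identify $u_x(t)$ with $S_tf(x)$ $m_\kappa$-a.e.\ via $L^2(\R^n\times E, m\times m_\kappa)$-convergence plus a subsequence. The only visible difference is in the last step: the paper exploits the standing hypothesis $0\in E$ together with the continuity of $u_x$ to get $\lim_{t\to0,\,t\in E}u_x(t)=u_x(0)=\lim_j f_j(x)=f(x)$ directly, whereas you run the standard Chebyshev/density argument, which has the mild advantage of not needing $0\in E$.
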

Note that if $ 0<a<2s$ then (\ref{eq:Holdersum}) holds when $E$ is the interval $[0,1]$.
Thus one of the consequences of the above results is the following well-known fact (see Cowling \cite{Co}).
\begin{corollary}
If $0<a<2s$ and $f\in H_s$ then (\ref{eq:limit}) holds. 
\end{corollary}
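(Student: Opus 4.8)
The plan is to read off (\ref{eq:limit}) as the case $E=[0,1]$ of Theorem 5, with $m_\kappa$ taken to be one-dimensional Lebesgue measure. The summability condition (\ref{eq:Holdersum}) required there is, for $E=[0,1]$, immediate: covering $[0,1]$ by intervals of length $2^{-m}$ takes exactly $2^m$ of them, since their total length must be at least $1$, so $N_E(2^{-m})=2^m$ and
\[
\sum_{m=0}^\infty N_E(2^{-m})\,2^{-2ms/a}=\sum_{m=0}^\infty 2^{m(1-2s/a)},
\]
which is a convergent geometric series precisely because $0<a<2s$ makes the exponent $1-2s/a$ negative. This is the remark preceding the statement, and the strictness of $a<2s$ is exactly what is needed, since the series diverges when $a=2s$.

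With this in hand, Theorem 5 produces, for almost every $x$, a representative of $t\mapsto S_t f(x)$ with $\lim_{t\to0}S_t f(x)=f(x)$, which is (\ref{eq:limit}). If one prefers to argue directly rather than to invoke Theorem 5, the same conclusion follows from the maximal estimate of Corollary 3 specialized to $E=[0,1]$. For $g\in\mathscr S$ one has $\big\|\sup_{0\le t\le1}|S_t g|\big\|_{L^2}\le C\|g\|_{H_s}$, and $S_t g(x)\to g(x)$ for every $x$ when $g\in\mathscr S$, by dominated convergence in $S_t g(x)-g(x)=(2\pi)^{-n}\int(\e^{it|\xi|^a}-1)\,\e^{i\xi\cdot x}\hat g(\xi)\,d\xi$, using $|\e^{it|\xi|^a}-1|\le 2$ and $\hat g\in L^1$. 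Then the usual density argument finishes it: approximating $f\in H_s$ by $g_j\in\mathscr S$ with $\|f-g_j\|_{H_s}\to0$, the oscillation $\limsup_{t,t'\to0^+}|S_t f(x)-S_{t'}f(x)|$ equals the same quantity computed for $f-g_j$, is dominated by $2\sup_{0\le t\le1}|S_t(f-g_j)(x)|$, and hence has $L^2$ norm at most $2C\|f-g_j\|_{H_s}\to0$; so $\lim_{t\to0^+}S_t f(x)$ exists for almost every $x$, and it equals $f$ since $S_t f\to f$ in $L^2$ as $t\to0$.

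I do not anticipate a genuine obstacle: the analytic substance is entirely in Theorems 4 and 5, and the corollary contributes only the trivial covering-number identity $N_{[0,1]}(2^{-m})=2^m$ together with the observation that the convergence threshold of the resulting geometric series is exactly $a=2s$. The points deserving a line of care are the extension of the maximal estimate from $\mathscr S$ to $H_s$ (standard Fatou plus density) and the choice of a measurable representative of $S_t f(x)$ for $f\in H_s$, whose Fourier transform need not be integrable — which is precisely what the truncations $f_N(x,t)$ introduced before Theorem 5 are for — and, if one goes through Theorem 5 directly, the remark that the standing assumption $0<\kappa<1$ there is not used in a way that excludes the endpoint $\kappa=1$.
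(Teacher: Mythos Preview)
The proposal is correct and takes essentially the same approach as the paper: apply Theorem~5 with $E=[0,1]$, compute $N_{[0,1]}(2^{-m})=2^m$, and observe that the geometric series in (\ref{eq:Holdersum}) converges precisely when $a<2s$. Your additional direct density argument via Corollary~3 and your remark about the standing assumption $0<\kappa<1$ go beyond what the paper spells out, but the core route is identical.
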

We also have
\begin{corollary}
In Theorem 3 the conditions $a>1$ and $b\le2s/(a-s)$ can be replaced by the conditions
$a\ge 2s$  and   $1/b>(a-2s) / 2s $.
\end{corollary}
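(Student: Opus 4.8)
The plan is to reduce the statement to Corollary 4. Writing $E=\{t_k;k=1,2,3,\dots\}$, it suffices to show that under the new hypotheses $a\ge 2s$ and $1/b>(a-2s)/2s$ --- together with $\#A_j\le C2^{bj}$ and the standing assumption $s>0$ --- the density sum $\sum_{m=0}^\infty N_E(2^{-m})\,2^{-2ms/a}$ is finite; the remaining hypotheses of Theorem 3, in particular $0<s\le1/2$, are then carried along but play no role here.

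First I would estimate $N_E(2^{-m})$ for a fixed $m\ge1$. The points $t_k\le 2^{-m}$ all lie in $(0,2^{-m}]$ and hence are covered by a single interval of length $2^{-m}$, while the finitely many $t_k\in(1/2,1)$ need only a bounded number of intervals. Every other point of $E$ lies in some $A_j$ with $1\le j\le m-1$; since $A_j$ is contained in an interval of length $2^{-j-1}$ it can be covered by $2^{m-j-1}$ intervals of length $2^{-m}$, and it can also be covered by $\#A_j\le C2^{bj}$ of them. Hence
\[
N_E(2^{-m})\ \le\ C+\sum_{j=1}^{m-1}\min\bigl(2^{m-j-1},\,C2^{bj}\bigr).
\]
For $b>0$ the two terms in the minimum balance near $j\approx m/(1+b)$; splitting the sum at that index and summing the two resulting (essentially geometric) series gives $\sum_{j=1}^{m-1}\min(2^{m-j-1},C2^{bj})\le C'2^{bm/(1+b)}$, and therefore $N_E(2^{-m})\le C''2^{bm/(1+b)}$ for every $m\ge1$. (When $b=0$ one obtains instead $N_E(2^{-m})\le C''m$, which is even more favorable.)

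It follows that
\[
\sum_{m=0}^\infty N_E(2^{-m})\,2^{-2ms/a}\ \le\ N_E(1)+C''\sum_{m=1}^\infty 2^{m\bigl(\frac{b}{1+b}-\frac{2s}{a}\bigr)},
\]
which is finite precisely when $b/(1+b)<2s/a$, i.e.\ when $b(a-2s)<2s$. For $a>2s$ this last inequality is exactly $1/b>(a-2s)/2s$, and for $a=2s$ it becomes $0<2s$, which holds since $s>0$. Thus the new hypotheses guarantee convergence of the density sum, and Corollary 4 then yields $\lim_{k\to\infty}S_{t_k}f(x)=f(x)$ almost everywhere. I expect that the only point requiring genuine care is the balancing argument for the sum of minima and the verification that the resulting condition $b/(1+b)<2s/a$ coincides with the stated inequality $1/b>(a-2s)/2s$; the rest is just the passage through Corollary 4.
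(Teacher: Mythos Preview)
Your proposal is correct and follows essentially the same route as the paper: the paper isolates the covering estimate as a separate lemma (Lemma~6), proving $N_E(2^{-m})\le C\,2^{bm/(b+1)}$ by splitting $E$ at a single threshold $k\approx m/(b+1)$ and covering $\bigcup_{j\le k}A_j$ pointwise while covering $\bigcup_{j>k}A_j$ by $2^{m-k+1}$ intervals, and then feeds this into Corollary~4 after the same algebraic check that $1/b>(a-2s)/2s$ is equivalent to $b/(b+1)<2s/a$. Your sum-of-minima argument is a harmless variant of the paper's hard split and yields the identical bound.
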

and
\begin{corollary}
Assume that  $(t_k)_1^\infty$  satisfies (\ref{eq:tlimit}),
  and that $n\ge1, a\ge 2s , 0<s\le1/2$, and that 
$\sum_{t=1}^\infty  t_k^\gamma<\infty$, where $1/\gamma>(a-2s)/ 2s$. If also $f\in H_s$ then (\ref{eq:seqlimit}) holds almost everywhere.
\end{corollary}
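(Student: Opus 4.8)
The plan is to deduce Corollary 9 from Corollary 8 by exactly the same mechanism that produces Corollary 1 from Theorem 3: convert the summability condition on $(t_k)$ into a dyadic counting estimate, and then check that the exponents line up with the hypotheses of the version of Theorem 3 furnished by Corollary 8.

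First I would pass from the hypothesis $\sum_{k=1}^\infty t_k^\gamma<\infty$ to an estimate of the form \eqref{eq:nrA_est}. Put $M=\sum_{k=1}^\infty t_k^\gamma<\infty$. If $t_k\in A_j$, i.e.\ $2^{-j-1}<t_k\le 2^{-j}$, then $t_k^\gamma>2^{-(j+1)\gamma}$, so that
\[
(\#A_j)\,2^{-(j+1)\gamma}\le \sum_{t_k\in A_j} t_k^\gamma \le M ,
\]
and hence $\#A_j\le M\,2^{\gamma}\,2^{\gamma j}=C\,2^{\gamma j}$ for $j=1,2,3,\dots$. Thus \eqref{eq:nrA_est} holds with $b=\gamma$.

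Next I would simply invoke Corollary 8. By assumption $n\ge1$, $0<s\le 1/2$, $a\ge 2s$, and $1/b=1/\gamma>(a-2s)/2s$, so all the hypotheses of the statement obtained from Theorem 3 by the replacement described in Corollary 8 are met (with $b=\gamma$ and $f\in H_s$). That statement then gives $\lim_{k\to\infty}S_{t_k}f(x)=f(x)$ for almost every $x\in\R^n$, which is \eqref{eq:seqlimit}.

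There is no genuine obstacle in this deduction; the only point to keep in mind is that the translation from the summability exponent $\gamma$ to the counting exponent $b$ is lossless — one gets $\#A_j\le C\,2^{\gamma j}$ with the \emph{same} exponent $\gamma$ — so the quantitative condition $1/\gamma>(a-2s)/2s$ is precisely what is required to apply Corollary 8. (Equivalently, Corollary 9 stands to Corollary 8 exactly as Corollary 1 stands to Theorem 3.)
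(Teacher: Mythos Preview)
Your proposal is correct and follows exactly the approach the paper takes: the paper simply says that the result ``will now follow by similar arguments as in the proof [of] Corollary 1,'' meaning one converts $\sum t_k^\gamma<\infty$ into the dyadic counting bound $\#A_j\le C\,2^{\gamma j}$ and then applies the modified version of Theorem 3 (Corollary 6 in the paper's numbering, your Corollary 8) with $b=\gamma$. Your verification that $1/b=1/\gamma>(a-2s)/2s$ is precisely the needed hypothesis is correct.
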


We remark that Corollary 7 gives an improvement of Theorem A and Corollary 1.\\
\par
We shall now study the case where $E$ is a Cantor set. Assume $0<\lambda<1/2$. We set $I_{0,1}=[0,1]$,
$I_{1,1}=[0,\lambda]$ and $I_{1,2}=[1-\lambda,1]$. Having defined $I_{k-1,1},\dots,I_{k-1,2^{k-1}}$, we define
$I_{k,1},\dots,I_{k,2^{k}}$ by taking away from the middle of each interval $I_{k-1,j}$ an interval of length 
$(1-2\lambda)l(I_{k-1,j})=(1-2\lambda)\lambda^{k-1}$, where $l(I)$ denotes the length of an interval $I$. We then define
Cantor sets by setting \[
C(\lambda)=\bigcap_{k=0}^\infty\bigcup_{j=1}^{2^k} I_{k,j}.
\]
It can be proved that $C(\lambda)$ has Hausdorff dimension \[\kappa=\log2/\log(1/\lambda)\] 
and that $m_\kappa(C(\lambda))=1$  (See \cite{Mat}, p. 60-62). We have the following result, where $S_t f(x)$ is defined
as in Theorem 5 with $E=C(\lambda)$.
\begin{theorem}
Assume $n\ge1, a>0,$ and $0<\lambda<1/2$. Also assume $s>a\kappa/2$ and $f\in H_s$. Then we can for almost every $x$
modify $S_t f(x)$ on  $m_\kappa$-nullset so that  \[
\lim_{\overset{t\rightarrow 0}{t\in C(\lambda)}}S_tf(x)=f(x)
.\]
\end{theorem}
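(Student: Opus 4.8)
The plan is to deduce Theorem 6 directly from Theorem 5 applied with $E=C(\lambda)$, the only real work being an elementary box-counting estimate for the Cantor set. First I would check that $E=C(\lambda)$ meets the standing hypotheses of Theorem 5: it is a bounded Borel subset of $\R$; it has Hausdorff dimension $\kappa=\log2/\log(1/\lambda)$ with $0<m_\kappa(C(\lambda))=1<\infty$ (both facts quoted above from Mattila); and $0\in C(\lambda)$, since $0\in I_{k,1}$ for every $k$. Hence the measurable function $S_tf(x)$ on $\R^n\times C(\lambda)$ is defined as in the paragraph preceding Theorem 5, and it suffices to verify the summability condition (\ref{eq:Holdersum}) for $E=C(\lambda)$.

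The key step is to estimate $N_{C(\lambda)}(2^{-m})$. By construction $C(\lambda)\subset\bigcup_{j=1}^{2^k}I_{k,j}$, a union of $2^k$ intervals each of length $\lambda^k$, so whenever $r\ge\lambda^k$ each $I_{k,j}$ is contained in some interval of length exactly $r$, giving $N_{C(\lambda)}(r)\le 2^k$. Given $m\ge0$, let $k=k(m)$ be the least nonnegative integer with $\lambda^{k}\le 2^{-m}$; then $N_{C(\lambda)}(2^{-m})\le 2^{k(m)}$, and by minimality $\lambda^{k(m)-1}>2^{-m}$, which after taking logarithms and using $\kappa\log(1/\lambda)=\log2$ yields $k(m)-1<m\kappa$. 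Consequently
\[
N_{C(\lambda)}(2^{-m})\le 2^{k(m)}\le 2\cdot 2^{m\kappa}.
\]

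Plugging this bound into the left side of (\ref{eq:Holdersum}) gives
\[
\sum_{m=0}^\infty N_{C(\lambda)}(2^{-m})\,2^{-2ms/a}\le 2\sum_{m=0}^\infty 2^{m(\kappa-2s/a)},
\]
a convergent geometric series precisely because the hypothesis $s>a\kappa/2$ is equivalent to $\kappa-2s/a<0$. Thus (\ref{eq:Holdersum}) holds with $E=C(\lambda)$, and Theorem 5 furnishes, for almost every $x\in\R^n$, a modification of $t\mapsto S_tf(x)$ on an $m_\kappa$-nullset for which $\lim_{t\to0,\,t\in C(\lambda)}S_tf(x)=f(x)$. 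This is exactly the assertion of Theorem 6.

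I do not anticipate a serious obstacle: Theorem 5 carries all the harmonic-analytic weight, and what remains is the standard observation that the natural covering of $C(\lambda)$ by the intervals $I_{k,j}$ has box-counting exponent equal to the Hausdorff dimension $\kappa$. The one point requiring care is the bookkeeping between the indices $k$ and $m$, so that the resulting geometric series has ratio $2^{\kappa-2s/a}<1$; this is where $s>a\kappa/2$ enters, and a matching lower bound $N_{C(\lambda)}(2^{-m})\gtrsim 2^{m\kappa}$ shows the condition is sharp for this line of argument.
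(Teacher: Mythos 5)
Your proposal is correct and follows essentially the same route as the paper: cover $C(\lambda)$ by the $2^k$ construction intervals of length $\lambda^k$, compare the scales $\lambda^k$ and $2^{-m}$ to get $N_{C(\lambda)}(2^{-m})\le C\,2^{\kappa m}$, and feed this into the summability condition of Theorem 5, which converges exactly when $s>a\kappa/2$. The only differences are cosmetic (your choice of $k(m)$ as the least integer with $\lambda^k\le 2^{-m}$ versus the paper's $\lambda^{k+1}<2^{-m}\le\lambda^k$), so there is nothing further to add.
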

\vspace{.4cm}
{\bf Remark.} In the proofs of Corollary 4 and Theorem 5 we first in the main part of the proof obtain a maximal estimate for smooth functions
and then prove a convergence result for functions 
in $H_s$. In the passage from the maximal estimate for smooth functions to the convergence result we use an approach which was 
mentioned to one of the authors by P. Sj\"ogren in a conversation, 2009.\\
\par
In Secton 2 we shall prove Theorems 1 and 2, and  Section 3 contains the proof of Theorem 3. In section 4 we prove Theorem 4, and in
Section 5 the proofs of Theorems 5 and 6 are given. \\
We shall finally construct a counter-example which gives the following theorem.
\begin{theorem}
Assume $t_k=1/(\log k)$ for $k=2,3,4,\dots$, and set \[
S^*f(x)=\sup_k|S_{t_k}f(x)|, x\in\R^n,
\]
for $f\in L^2(\R^n)$. Then $S^*$ is not a bounded operator on $L^2(R^n)$ in the case $n=1, a>1,$ and also in the case $n\ge2, a=2$.

\end{theorem}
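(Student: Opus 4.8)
The plan is to construct a single $f\in L^2(\R^n)$ for which $S^*f$ cannot be controlled in $L^2$ by $\|f\|_2$, by exploiting the extreme sparseness of the sequence $t_k=1/\log k$. The key point is that for large $k$ the gaps $t_k-t_{k+1}\sim 1/(k(\log k)^2)$ are tiny, so the set $\{t_k\}$ is very dense near $0$ at scales coarser than these gaps; in the language of Theorem 4 and Corollary 1, the relevant summability fails badly for every $s>0$, and one expects an honest counterexample rather than merely a failure of the sufficient condition. I would make this quantitative by the standard duality/randomization scheme: it suffices to produce, for each large $N$, a function $f_N$ with $\|f_N\|_2=1$ such that $\|S^*f_N\|_2\to\infty$. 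One builds $f_N$ as a superposition $\hat f_N(\xi)=\sum_{\nu} c_\nu e^{-it_{k_\nu}|\xi|^a}\chi_\nu(\xi)$ of Knapp-type wave packets concentrated in well-separated frequency annuli, with randomized signs $c_\nu=\pm 1/\sqrt{M}$; for each spatial point $x$ in a suitable set one selects the time $t=t_{k_\nu}$ that aligns the $\nu$-th packet, making $|S_{t_{k_\nu}}f_N(x)|$ large while the other packets contribute only a random (hence $O(\sqrt{\log M})$ on average, or bounded in $L^2$) error.

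More concretely, I would follow the construction used for the sharp $L^2$ counterexamples for Schrödinger maximal operators (as in the Dahlberg–Kenig type examples, and in the $a>1$, $n=1$ case the Sjölin/Vega examples): fix a large frequency scale $R$, take $\hat f$ supported in $\{|\xi|\sim R\}$, and note that the phase $t|\xi|^a$ varies by roughly $t R^a$ across the annulus while the spatial modulation $x\cdot\xi$ varies by $|x| R$. Choosing $t$ of order $R^{-a}$ times an integer and $|x|$ of order $R^{a-1}$, the stationary-phase/focusing heuristic shows $|S_tf(x)|$ can be made comparable to a fixed positive constant on a set of $x$ of measure $\sim R^{-a+1}$ wide in the radial direction times angular factors, for a matching value of $t$. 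The sequence $t_k=1/\log k$ realizes all the needed values of $t$ because between consecutive dyadic scales $2^{-j-1}<t\le 2^{-j}$ it contains $\#A_j\sim e^{2^j}/(j 2^j)\to\infty$ points (super-exponentially many), far more than the finitely many focusing times one needs; so one can actually pick $t_k$'s matching a geometric sequence of scales $R_\nu=2^{\nu}$, $\nu=1,\dots,M$, and superpose the corresponding packets orthogonally. Orthogonality in frequency gives $\|f\|_2^2\sim M$, while for a positive-measure set of $x$ one has $S^*f(x)\gtrsim 1$ from the single aligned packet plus a controlled error, yielding $\|S^*f\|_2^2\gtrsim (\text{measure of good set})$. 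Normalizing, one gets $\|S^*f\|_2/\|f\|_2\to\infty$ as $M\to\infty$.

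The main obstacle is the cross-term control: when $x$ is chosen to focus the $\nu$-th packet at time $t_{k_\nu}$, one must show that the sum of the other packets $\sum_{\mu\ne\nu}c_\mu e^{it_{k_\nu}|\xi|^a}\chi_\mu$ evaluated at $x$ does not cancel the main term. Two devices handle this. First, separate the frequency scales $R_\nu$ enough (e.g.\ $R_{\nu+1}\ge R_\nu^{100}$) that the focusing sets in $x$ for different $\nu$ are essentially disjoint and the off-scale packets are in their dispersive (non-focused) regime, where $|S_{t_{k_\nu}}f_\mu(x)|$ is pointwise small by non-stationary phase; second, use Khintchine's inequality on the random signs so that $\bigl\|\sum_{\mu\ne\nu}c_\mu S_{t_{k_\nu}}f_\mu\bigr\|_{L^2(\text{good set for }\nu)}$ is, in expectation, dominated by $\bigl(\sum_\mu\|S_{t_{k_\nu}}f_\mu\|_{L^2}^2\bigr)^{1/2}$, which is bounded. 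Summing the lower bounds over the $M$ disjoint good sets and dividing by $\|f\|_2=\sqrt M$ gives a growth like $\sqrt M$ — in fact any unbounded growth suffices. For the case $n\ge 2, a=2$ the same scheme works with $n$-dimensional Knapp caps and the classical parabolic rescaling; the arithmetic of $t_k=1/\log k$ enters only through the observation that $\#A_j\to\infty$ super fast, which is all that is needed to embed the requisite times.
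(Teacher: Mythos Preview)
Your scheme is substantially more elaborate than what is needed, and as written the bookkeeping does not close. The decisive mechanism you are missing is that a \emph{single} frequency scale already produces an unbounded ratio. In the paper one takes one bump $\hat f_\nu(\xi)=\varphi(2^{-\nu}\xi)$, so $\|f_\nu\|_2\sim 2^{\nu/2}$, and applies stationary phase to $\int e^{i(t2^{\nu a}|\xi|^a+2^\nu x\xi)}\varphi(\xi)\,d\xi$. For each $x\in[C2^{-\nu},1]$ the phase has a nondegenerate critical point provided $t$ is within $e^{-100\nu}$ of $x\,2^{\nu(1-a)}$, and the super-exponential density of $\{t_k=1/\log k\}$ near $0$ (gaps $\lesssim e^{-2^{\nu\varepsilon}}$ in the relevant range) guarantees such a $t_k$ for \emph{every} $x$ in that interval. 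This yields $S^*f_\nu(x)\gtrsim 2^{\nu/2}x^{-1/2}$ on all of $[C2^{-\nu},1]$, hence $\|S^*f_\nu\|_2^2\gtrsim 2^\nu\int_{C2^{-\nu}}^1 x^{-1}\,dx\sim 2^\nu\,\nu$, while $\|f_\nu\|_2^2\sim 2^\nu$. The case $n\ge 2$, $a=2$ is obtained by tensoring with a fixed bump $\psi(\xi')$ in the remaining variables and applying Plancherel in $x'$.

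The gap in your outline is that you attach to each packet $f_\nu$ a \emph{fixed} time $t_{k_\nu}$ (both through the phase $e^{-it_{k_\nu}|\xi|^a}$ you insert into $\hat f_N$ and in the subsequent selection step). For any fixed $t$ one has $\|S_t f_\nu\|_2=\|f_\nu\|_2$ exactly, so a single packet at a single time cannot beat its own $L^2$ norm; with coefficients $c_\nu=\pm M^{-1/2}$ your ``aligned packet'' contributes at most $M^{-1/2}\|f_\nu\|_2$ in $L^2$ on its good set, and after summing over $M$ disjoint good sets and dividing by $\|f\|_2$ you recover only a bounded ratio, not growth like $\sqrt M$. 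The randomization and the super-separated scales do not repair this: Khintchine only controls the error terms, it does not manufacture a gain in the main term. What is actually needed---and what makes the multi-scale superposition, the random signs, and the cross-term analysis all unnecessary---is to let the choice of $k$ vary with $x$ within one scale, so that the stationary-phase profile $x^{-1/2}$ is realized over an interval of length comparable to $1$; the $\int x^{-1}\,dx$ then produces the logarithmic divergence.
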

\section{Proofs of Theorems 1 and 2}
For $m\in L^\infty(\R^n)$ and $1<p<\infty$ we set\[
T_m f=\mathscr{F}^{-1}(m\hat f), \quad f\in L^p\cup L^2.
\]
We say that $m$ is a Fourier multiplier for $L^p$ if $T_m$  can be extended to a bounded operator on $L^p$, and  we let $M_p$ denote the class of multipliers on $L^p$. We set $\| m\|_{M_p}$ equal to the norm of $T_m$ as an operator on $L^p$.
\par
Now let $1<p\le2$ and $0<s<a$. For $f\in\mathscr{S}$ and with $\hat f(\xi)=(1+|\xi|^2)^{-s/2}\hat g(\xi)$ one obtains \[
\S_t f(x)=\left(\mathscr{F}^{-1}\left(\mu(\xi)\hat g(\xi)\right)\right)(x)=T_\mu g(x),
\]
where \[
\mu(\xi)=\frac{\e^{it|\xi|^a}}{(1+|\xi|^2)^{s/2} }\,.
\]
We shall prove that $\mu\in M_p$ for $p\in I$, where $I$ is an interval defined in the introduction.
 We need som well-known results.
\begin{lemma}
Assume that $m\in M_p$ for some $p$ which $1<p<\infty$. Let $b$ be a positive number and let $k(\xi)=m(b\xi)$ for $\xi\in\R^n$. Then $k\in M_p$
and $\|k\|_{M_p}=\|m\|_{M_P}$.
\end{lemma}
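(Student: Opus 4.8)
The plan is to use the elementary fact that Fourier multiplier norms on $L^p$ are invariant under dilations; once this is set up correctly the lemma reduces to a change of variables. First I would introduce, for $b>0$, the dilation operator $(\delta_b f)(x)=f(bx)$ acting on functions on $\R^n$. Computing the $L^p$ norm directly gives $\|\delta_b f\|_p=b^{-n/p}\|f\|_p$, so $\delta_b$ is a bijective bounded operator on $L^p$ with inverse $\delta_{1/b}$, and $\|\delta_b\|=b^{-n/p}$, $\|\delta_{1/b}\|=b^{n/p}$.

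Next I would record how $\delta_b$ interacts with the Fourier transform in the convention used in the paper, namely $\widehat{\delta_b f}(\xi)=b^{-n}\hat f(\xi/b)$, obtained by the substitution $y=bx$ in the defining integral of $\hat f$. Combining this with the definition $T_m f=\mathscr{F}^{-1}(m\hat f)$ and the substitution $\eta=\xi/b$ inside the inverse Fourier integral, one verifies the conjugation identity
\[
T_k=\delta_{1/b}\,T_m\,\delta_b ,\qquad k(\xi)=m(b\xi).
\]
It is cleanest to check this first for $f\in\mathscr S$, where all the integrals converge absolutely (here $m\in L^\infty$ and $\hat f\in\mathscr S$, so $m\hat f\in L^1$), and to note that $\delta_b$ maps $\mathscr S$ into $\mathscr S$; since $\mathscr S$ is dense in $L^p$ for $1<p<\infty$ and $m\in M_p$, the identity then shows that $T_k$ restricted to $\mathscr S$ is the restriction of the bounded operator $\delta_{1/b}T_m\delta_b$, so $k\in M_p$.

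Finally, from the conjugation identity and the operator norms of $\delta_b$, $\delta_{1/b}$ computed above,
\[
\|k\|_{M_p}=\|T_k\|\le\|\delta_{1/b}\|\,\|T_m\|\,\|\delta_b\|=b^{n/p}\,\|m\|_{M_p}\,b^{-n/p}=\|m\|_{M_p},
\]
and applying the same chain of inequalities to the identity $T_m=\delta_b T_k\delta_{1/b}$ gives the reverse inequality, whence $\|k\|_{M_p}=\|m\|_{M_p}$.

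There is no genuine obstacle in this argument; the only point that needs a little care is the bookkeeping of the powers of $b$, which enter in two places — once as the Jacobian $b^{-n/p}$ in $\|\delta_b\|_{L^p\to L^p}$ and once as the Jacobian $b^{-n}$ in $\widehat{\delta_b f}$ together with the $b^n$ produced by the substitution in the multiplier integral. These cancel exactly, which is precisely why the resulting constant is $1$, independent of both $b$ and $n$.
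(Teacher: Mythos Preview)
Your argument is correct and is the standard proof of this well-known dilation invariance. The paper does not actually supply a proof of this lemma; it is stated there as a ``well-known result'' and used without further justification, so there is nothing to compare your approach against.
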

We shall also use the following multiplier theorem (see Stein (\cite{Ste70}, p. 96).\\[.2cm]
{\bf Theorem B:} {\em Assume that $m$ is a bounded function on $\R^n\setminus\{0\}$ and that \[
|D^\alpha m(\xi)|\le C_\alpha |\xi|^{-|\alpha|}
\]  

for $\xi\ne0$ and $|\alpha|\le k$, where $k$ is an integer and $k>n/2$. Then $m\in M_p$ for $1<p<\infty$.
}\\
\par
We shall also need the following result (see Miyachi \cite{Miy}, p 283)\\[.2cm]
{\bf Theorem C:} {\em Assume $\psi\in C^\infty(\R^n)$ and that $\psi$ vanishes in a neighbourhood of the origin and is equal to $1$
outside a compact set. Set \[
m_{a,s}(\xi)=\psi(\xi)|\xi|^{-s} \e^{i|\xi|^a} \,,\quad\xi \in\R^n, 
\] 
where $a>1$ and $0<s<a$. Then $m_{a,s}\in M_p$ if $1<p<\infty$ and $|1/p\,-\,1/2|\le s / na$. 
}
\par
{\bf Remark. }In Miyachi's formulation of this result the function $\psi$ is replaced by a function $\psi_1$ with the properties that $\psi_1\in C^\infty,
0\le\psi_1\le1, \psi_1(\xi)=0$ for $|\xi|\le1$, and $\psi_1(\xi)=1$ for $|\xi|\ge2$. However, the two formulations are equivalent since the function
$(\psi-\psi_1)|\xi|^{-s} \e^{i|\xi|^a}$ belongs to $C^\infty_0$.\\[.2cm]
It follows from Theorem C that $m_{a,s}\in M_p$ if $p\in I$.\\
We shall then give the proof of the above statement about the function $\mu$.
\begin{lemma}
Assume $a>1$ and $0<s<a$ and also $t>0$. Set\[
\mu(\xi)=\e^{it|\xi|^a}(1+|\xi|^2)^{-s/2}\,,\quad\xi\in \R^n.
\]
Then $\mu\in M_p$ for $p\in I$.
\end{lemma}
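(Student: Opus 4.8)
The plan is to split $\mu$ into a high-frequency piece, handled through Miyachi's multiplier theorem (Theorem C), and a low-frequency piece supported near the origin, handled by showing its inverse Fourier transform is integrable. Concretely, I would fix once and for all a cutoff $\chi\in C^\infty(\R^n)$ with $\chi(\eta)=0$ for $|\eta|\le1$ and $\chi(\eta)=1$ for $|\eta|\ge2$, and for the given $t>0$ write
\[
\mu(\xi)=\chi(t^{1/a}\xi)\,\mu(\xi)+\bigl(1-\chi(t^{1/a}\xi)\bigr)\mu(\xi).
\]
It then suffices to check that each of the two summands lies in $M_p$ for $p\in I$.

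For the first (high-frequency) summand I would start from the Miyachi multiplier $m_{a,s}(\xi)=\chi(\xi)|\xi|^{-s}\e^{i|\xi|^a}$, which by Theorem C (and the Remark after it, which allows the cutoff $\chi$) lies in $M_p$ for $p\in I$; here the hypotheses $a>1$ and $0<s<a$ enter. Applying Lemma 1 with $b=t^{1/a}$ shows that $t^{s/a}m_{a,s}(t^{1/a}\xi)=\chi(t^{1/a}\xi)|\xi|^{-s}\e^{it|\xi|^a}$ is in $M_p$ for $p\in I$ as well. Multiplying this by $\kappa(\xi):=|\xi|^s(1+|\xi|^2)^{-s/2}$ recovers exactly the first summand, and $\kappa\in M_p$ for all $p\in(1,\infty)$ by Theorem B: $\kappa$ is smooth away from $0$, near the origin one has $|D^\alpha\kappa(\xi)|\le C_\alpha|\xi|^{s-|\alpha|}\le C_\alpha|\xi|^{-|\alpha|}$ for $|\xi|\le1$ since $s>0$, and for large $|\xi|$ its derivatives decay faster than $|\xi|^{-|\alpha|}$, so Mikhlin's condition holds. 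Since a product of two $L^p$-multipliers is again an $L^p$-multiplier, the first summand lies in $M_p$ for $p\in I$.

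For the second (low-frequency) summand, note that it is supported in $\{|\xi|<2t^{-1/a}\}$ and is $C^\infty$ on $\R^n\setminus\{0\}$, the only non-smooth factor being $\e^{it|\xi|^a}$. Using Fa\`a di Bruno's formula together with the bound $|D^\gamma(|\xi|^a)|\le C_\gamma|\xi|^{a-|\gamma|}$, one obtains $|D^\alpha(\e^{it|\xi|^a})|\le C_{\alpha,t}|\xi|^{a-|\alpha|}$ for $0<|\xi|\le1$ and $|\alpha|\ge1$ (the most singular contribution comes from a single derivative of order $|\alpha|$ landing on $|\xi|^a$), while $\bigl(1-\chi(t^{1/a}\xi)\bigr)(1+|\xi|^2)^{-s/2}$ is $C^\infty$ with bounded derivatives. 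Hence $D^\alpha$ of the second summand lies in $L^2(\R^n)$ whenever $\int_{|\xi|\le1}|\xi|^{2(a-|\alpha|)}\,d\xi<\infty$, that is, whenever $|\alpha|<n/2+a$; since $a>1$, there is an integer $k$ with $n/2<k<n/2+a$, so the summand belongs to $H^k(\R^n)$ with $k>n/2$. Then $\mathscr{F}^{-1}$ of it lies in $L^1(\R^n)$ (Cauchy--Schwarz, using $(1+|\cdot|^2)^{-k/2}\in L^2$), so the associated operator is convolution with an $L^1$ kernel and the summand lies in $M_p$ for every $p\in[1,\infty]$, in particular for $p\in I$. Adding the two summands gives the lemma.

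The main obstacle I expect is the low-frequency piece: $\e^{it|\xi|^a}$ is not $C^\infty$ at the origin when $a$ is not an even integer, and one must quantify this failure. The estimate $|D^\alpha(\e^{it|\xi|^a})|\le C_{\alpha,t}|\xi|^{a-|\alpha|}$ shows the singularity has order $a$, and the hypothesis $a>1$ is precisely what puts an integer in the window $(n/2,\,n/2+a)$, i.e. guarantees enough smoothness of the compactly supported piece to produce an $L^1$ convolution kernel. A minor but essential technical point is to use the $t$-dependent cutoff $\chi(t^{1/a}\xi)$ rather than a fixed one, so that the scaling from Lemma 1 matches the support splitting exactly; Lemma 2 only asks for membership in $M_p$, not a bound uniform in $t$, so the $t$-dependence of the resulting norms is harmless.
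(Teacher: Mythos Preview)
Your proof is correct and follows the same overall strategy as the paper: split $\mu$ into a low- and high-frequency piece, handle the high-frequency part via Miyachi's Theorem C together with dilation invariance (Lemma 1) and the auxiliary multiplier $|\xi|^s(1+|\xi|^2)^{-s/2}$, and deal with the low-frequency piece separately. The differences are minor. First, you use a $t$-dependent cutoff $\chi(t^{1/a}\xi)$ so that after rescaling the cutoff becomes the fixed $\chi$; the paper uses a fixed cutoff $\psi$ and invokes the Remark after Theorem C to allow the $t$-dependent cutoff $\psi(t^{-1/a}\eta)$ that appears after rescaling. Second, for the low-frequency piece the paper simply observes that $\mu_1=\varphi\,\mu$ satisfies the Mikhlin condition in Theorem B (since $|D^\alpha e^{it|\xi|^a}|\lesssim|\xi|^{a-|\alpha|}\le|\xi|^{-|\alpha|}$ for small $|\xi|$), which is quicker than your route through $H^k$ and an $L^1$ convolution kernel; both arguments are valid and rely on the same derivative estimate for $e^{it|\xi|^a}$ near the origin.
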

\begin{proof}[\em Proof of Lemma 2]
 We first take $\psi$ as in Theorem C and also set $\varphi=1-\psi$. One then has
\[
\mu(\xi)=\varphi(\xi)\e^{it|\xi|^a}(1+|\xi|^2)^{-s/2} + \psi(\xi)\e^{it|\xi|^a}(1+|\xi|^2)^{-s/2} =\mu_1(\xi) + \mu_2(\xi).
 \]
We write $\mu_2=\mu_3\mu_4$, where
\[
\mu_3(\xi)=\psi(\xi)\frac{\e^{it|\xi|^a}}{|\xi|^s }
\]
and
\[
\mu_4(\xi)=\frac{|\xi|^s }{(1+|\xi|^2)^{s/2}}.
\]
We have \[
\mu_3( t^{-1/a}\eta)\,=\,\psi(t^{-1/a}\eta)\,\frac{\e^{i|\eta|^a}}{|t^{-1/a}\eta|^s}=\psi(t^{-1/a}\eta)\,t^{s/a}\,\frac{\e^{i|\eta|^a}}{|\eta|^s}\,.
\]
We let $p\in I$ and it then follows from the Remark after Theorem C that $\mu_3\in M_p$. Also $\mu_4\in M_p$ since $I\subset(1,\infty)$ 
(see Stein \cite{Ste70},  p. 133).\\
Finally\[
\mu_1(\xi)=\varphi(\xi)\,\frac{\e^{it|\xi|^a}}{(1+|\xi|^2)^{s/2}}
\]
and it is easy to see that $\mu_1$ satisfies the conditions in Theorem B. We conclude that $\mu_1\in M_p$ and thus also $\mu\in M_p$. 
\par
\end{proof}
For $f\in L^p_s, \, p\in I$, and $a>1$, and $0<s<a$, we define $S_t f$ by setting $S_t f=T_\mu g$. It is then easy to see that
\[
\left(S_t f\right)\hat{}\,(\xi)\,=\,\e^{it|\xi|^a}\hat f(\xi).
\]
Observe that according to the Hausdorff-Young theorem $\hat f\in L^q$ where $1/p\,+\,1/q\,=\,1$ .  
\par
We shall then give the proof of Theorem 2. We shall write $A\lesssim B$ if there is a constant $K$ such that $A\le KB$.\\
\begin{proof}[Proof of Theorem 2] 
We set $C=\delta^{-1/a}$  and then have $C^{-s}=\delta^{s/a}$. It follows 
that \[
m(C\xi)=\frac{\e^{i|\xi|^a} -1}{(1+C^2|\xi|^2)^{s/2}}=m_1(\xi)+m_2(\xi) - m_3(\xi),
\]
where
\[
m_1(\xi)=\varphi(\xi)\,\frac{\e^{i|\xi|^a}-1}{(1+C^2|\xi|^2)^{s/2}},
\]\[
m_2(\xi)=\psi(\xi)\,\frac{\e^{i|\xi|^a}}{(1+C^2|\xi|^2)^{s/2}}
\]
and 
\[
m_3(\xi)=\psi(\xi)\,\frac{1}{(1+C^2|\xi|^2)^{s/2}}.
\]
Here  $\varphi$  and $\psi$ are defined as in the proof of Lemma 2, and we may assume that $\varphi$  and $\psi$ are radial functions.\\
We have \[
m_2(\xi)=m_4(\xi)\,m_5(\xi),
\]
where \[
m_4(\xi)=\psi(\xi)\,\frac{\e^{i|\xi|^a}}{(C^2|\xi|^2)^{s/2}}=\delta^{s/a}\,\psi(\xi)\,\frac{\e^{i|\xi|^a}}{|\xi|^s}
\]
and \[
m_5(\xi)=\frac{(C^2|\xi|^2)^{s/2}}{(1+C^2|\xi|^2)^{s/2}}.
\]
It follows from Theorem C that $m_4\in M_p$ and $\|m\|_{M_p}\lesssim\delta^{s/a}$
for $p\in I$. Also $m_5$ has the same multiplier norm as the function $|\xi|^s(1+|\xi|^2)^{-s/2}$. We conclude that $\|m_2\|_{M_p}
\lesssim\delta^{s/a}$ for $p\in I$.\\

We want to show that 
\[
|D^\alpha m_1(\xi)|\lesssim C^{-s} |\xi|^{-|\alpha|} \mbox{ for } \xi\in\R^n\setminus\{0\}
 \]  
 for all multi-index $\alpha=(\alpha_1,\dots,\alpha_n)$, where $\alpha_i$ are non-negative integers.
 Invoking Theorem B we conclude  that \[ \| m_1\|_{M_p}\lesssim C^{-s}=\delta^{s/a}\]
 for $1<p<\infty$.\
 \par
 First we set \[
 m_{10}(x)=\varphi_0(x) \frac{\e^{ix^{a/2}}-1}{(1+C^2x)^{s/2}},
 \] 
 where we define $\varphi_0$ by taking $\varphi_0(x)=\varphi(\xi)$ if $x=|\xi|^2$ and we then have
 $m_1(\xi)=m_{10}(|\xi|^2)$.
 
 We get for  $x>0$  \[
 D^j\frac1{(1+C^2x))^{s/2}}=\frac{C_j C^{2j}}{(1+C^2x))^{s/2+j}}.
\]
 Hence we have
\begin{align}
\label{eq:diffest1}
|D^j\frac1{(1+C^2x)^{s/2}}| \lesssim  x^{-j} \,C^{-s}x^{-s/2}.
\end{align}
on support of $\varphi_0$.
One also has $|\e^{ix^{a/2}}-1|\le x^{a/2}$ and $D^j(\e^{ix^{a/2}}-1)$ are linear combinations of functions
$\e^{ix^{a/2}} x^{ka/2-j}$ for $j\ge1$, where $k$ is an integer $1\le k\le j$. Hence 
\begin{align}
\label{eq:diffest2}
|D^j(\e^{ix^{a/2}}-1)| \lesssim x^{a/2\,-\,j},\quad j=0,1,2,\dots,
\end{align}
for $x\in \mbox{supp}\,\varphi$.\\
A combination of (\ref{eq:diffest1}) and (\ref{eq:diffest2}) then gives \[
|D_jm_{10}(x)|\lesssim   x^{-j} C^{-s} x^{a/2 \,-\,s/2} 
\]
 Let $\alpha$  and $\beta$ denote $n$-dimenisonal muti-index. By induction over $j=0,1,2,\dots,$ and $|\alpha|=j$
 we can write $D^\alpha m_1(\xi)$ as a finite linear combination of functions of the form \[
 D^k m_{10}(|\xi|^2 ) \xi^\beta
\]
with   $ j/2 \le k \le j$ and $|\beta| =2k- j$. We conclude that \[
|D^\alpha m_1(\xi)| \lesssim \max_{|\alpha|/2\le k \le |\alpha|} |\xi|^{-2k} C^{-s} |\xi|^{a-s}|\xi|^{2k-j}=C^{-s}|\xi|^{-|\alpha|}
\lesssim \delta^{s/a}|\xi|^{-|\alpha|}.
\] 
\par
It remains to study $m_3$.  Define $m_{30}(x)$ analogously to the definition of $m_{10}(x)$ on $\mbox{supp}\,\varphi_0$, such that  $m_{30}(x)=m_3(\xi)$
when $x=|\xi|^2$, we have \[
m_{30}(x)=\psi_0(x)\,\frac{1}{(1+C^2x)^{s/2}}
\]
and invoking (\ref{eq:diffest1}) 
 \[
|D^j(1+C^2x)^{-s/2}|\lesssim C^{-s}\,x^{-j}
\]
on $\mbox{supp}\,\psi_0$. Also $|D^j\psi_0(x)|\lesssim x^{-j}$ on $\mbox{supp}\,\psi_0$.\\
We conclude that\[
|D^j(m_{30}(x)|\lesssim C^{-s}\,x^{-j}
\]
and arguing as above we obtain \[
|D^\alpha m_3(\xi)| \lesssim \max_{|\alpha||/2\le k \le |\alpha|} |\xi|^{-2k} C^{-s} |\xi|^{2k-j}=C^{-s} |\xi|^{-|\alpha|}
 \lesssim \delta^{s/a} |\xi|^{-|\alpha|}
  \]
 for $\xi\in\mbox{ supp } m_3$  and $j=0,1,2,\dots.$ Invoking Theorem B we conclude that $\|m_3\|_{M_p}\lesssim\delta^{s/a}$ 
for $1<p<\infty$. This completes the proof of Theorem 2
\end{proof}
We shall finally give the proof of Theorem 1.
\begin{proof}[Proof of Theorem 1]
We set \[
\mu_0(\xi)=\frac{\e^{it_k|\xi|^a}}{(1+|\xi|^2)^{}s/2}
\]
\[
m(\xi)=\frac{\e^{it_k|\xi|^a}-1}{(1+|\xi|^2)^{s/2}}
\]
and also have \[
k_s(\xi)=(1+|\xi|^2)^{-s/2}.
\]
It follows that \[
T_{\mu_0}g - \mathscr{J}_sg=T_mg
\]
for $g\in\mathscr{S}$.\\
We have $f\in L^p_s$ where $p\in I$ and it follows that $f=\mathscr{J}_sg$ for some $g\in L^p$. We choose a sequence $(g_j)_1^\infty$
such that $g_j\in\mathscr{S}$ and $g_j\rightarrow g$ in $L^p$ as $j\rightarrow\infty$.\\
One then has\[
T_{\mu_0}g_j - \mathscr{J}_sg_j=T_mg_j
\]
for every $j$. Letting $j$ tend to $\infty$ we obtain
\[
T_{\mu_0}g - \mathscr{J}_sg=T_mg
\]
since the three operators $T_{\mu_0}$, $\mathscr{J}_s$ and $T_m$ are all bounded on $L^p$. It follows that\[
S_{t_k}f-f=T_mg.
\]
Here we have used Lemma 2 and Theorem 2.\\
We now set $h_k=S_{t_k}f-f$ and hence $h_k=T_mg$. It follows from Theorem 2 that \[
\|h_k\|_p\lesssim t_k^{s/a} \|g\|_p
\] 
and we conclude that\[
\sum_{k=1}^\infty\int |h_k|^p\,dx\le\left( \sum_{k=1}^\infty t_k^{ps/a}  \right)\int|g|^p\,dx<\infty.
\]
Applying the theorem on monotone convergence on then obtain\[
\int \left( \sum_1^\infty|h_k|^p\right)\,dx<\infty
\]
and hence $\sum_1^\infty|h_k|^p$ is convergent almost everywhere. It follows that $\lim_{k\rightarrow\infty}|h_k|=0$ alomst everywhere and we conclude that\[
\lim_{k\rightarrow\infty}
S_{t_k}f(x)=f(x)\]
almost everywhere. This completes the proof of Theorem 1.
\end{proof}
\section{Proof of Theorem 3 and its corollaries}
We first give the proof of Theorem 3.
\begin{proof}[Proof of Theorem 3]
We may assume $b=2s/(a-s)$. Fix $j$. By adding points to $A_j$ we can get an increasing sequence $(v_k)_{k=0}^N$ and $\tilde A_j=
\{v_k;k=,0,\dots,N\}$ such that $v_0=0, v_N=2^{-j} ,  \#\tilde A_j\le C2^{bj}$, and \(
v_k-v_{k-1}\le C2^{-j}2^{-bj}.\)\\
We split the operator $S_{v_k}$ into a low frequency part and a high frequency part\[
S_{v_k}f(x)=S_{v_k,\mbox{\tiny low}_j}f(x)+S_{v_k,\mbox{\tiny high}_j}f(x)
\]
where\[
S_{k,\mbox{\tiny low}_j}f(x)0=(2\pi)^{-n} \int_{\R^n} \e^{i\xi\cdot x} \e^{iv_k|\xi|^a}\chi_{E_j}\hat f(\xi)\, d\xi,
\]
and\[
S_{k,\mbox{\tiny high}_j}f(x)=(2\pi)^{-n} \int_{\R^n} \e^{i\xi\cdot x} \e^{iv_k|\xi|^a}\chi_{E_j^c}\hat f(\xi)\, d\xi,
\]
with $E_j=\{ \xi\in\R^n; |\xi|\le2^{b_1j} \}$ and $b_1=b/\, 2s$.\\
We shall prove that
\begin{align}
\label{eq:lowsum}
\sum_j2^{bj}\sum_{\overset{v_k\in\tilde A_j}{k>0}}\| S_{k,\mbox{\tiny low}_j}f -S_{k-1,\mbox{\tiny low}_j}f  \|_2^2\le C\|f\|_{H_s}2
\end{align}
and
\begin{align}
\label{eq:highsum}
\sum_j \sum_{v_k\in\tilde A_j}\| S_{k,\mbox{\tiny high}_j}f \|_2^2\le C\|f\|_{H_s}^2.
\end{align}
We first assume that (\ref{eq:lowsum}) and (\ref{eq:highsum}) hold. Using the Schwarz inequality we then have
\[
\sup_{v_k\in\tilde A_j} |S_{k,\mbox{\tiny low}_j} f(x)-f(x)|^2\le  \left( |S_{0,\mbox{\tiny high}_j}f(x)|+ \sum_{\overset{v_k\in\tilde A_j}{k>0}}
|S_{k,\tiny\mbox{\tiny low}_j}f(x)-S_{k-1,\mbox{\tiny low}_j}f(x)|\right)^2 
\]
\[
\le  2|S_{0,\mbox{\tiny high}_j}f(x)|^2+ C2^{bj}\sum_{\overset{v_k\in\tilde A_j}{k>0}}
|S_{k,\mbox{\tiny low}_j}f(x)-S_{k-1,\mbox{\tiny low}_j}f(x)|^2
\]
and invoking (\ref{eq:lowsum})   and (\ref{eq:highsum})
\[
\sum_j \sup_{v_k\in\tilde A_j} |S_{k,\mbox{\tiny low}_j} f(x)-f(x)|^2\le 2\sum_j |S_{0,\mbox{\tiny high}_j}f(x)|^2+ C \sum_j2^{bj}\sum_{\overset{v_k\in\tilde A_j}{k>0}} |S_{k,\mbox{\tiny low}_j}f(x)-S_{k-1,\mbox{\tiny low}_j}f(x)|^2
\]
and 
\begin{align}
\label{eq:integralsuplow}
\int\sum_j \sup_{v_k\in A_j} |S_{k,\mbox{\tiny low}_j} f(x)-f(x)|^2\,dx\le C \|f\|_{H_s}^2.
\end{align}
Using (\ref{eq:highsum}) we also obtain
\begin{align}
\label{eq:integralsuphigh}
\begin{array}{l}
\int\sum_j \sup_{v_k\in A_j} |S_{k,\mbox{\tiny high}_j} f(x)|^2\,dx\\
\quad\le \int\sum_j \sup_{v_k\in\tilde A_j} |S_{k,\mbox{\tiny high}_j} f(x)|^2\,dx\le C\|f\|^2_{H_s}.
\end{array}
\end{align}
The theorem follows from (\ref{eq:integralsuplow}) and (\ref{eq:integralsuphigh}).\\
We shall now prove (\ref{eq:lowsum}) an first observe that\[
S_{k,\mbox{\tiny low}_j}f(x)-S_{k-1,\mbox{\tiny low}_j}f(x)=(2\pi)^{-n} \int_{\R^n} \e^{i\xi\cdot x}
\left( \e^{iv_k|\xi|^a}-\e^{iv_{k-1}|\xi|^a}\right)\chi_{E_j}\hat f(\xi)\, d\xi,
\]
Applying Plancherel's theorem we obtain \[
\begin{array}{l}
\|S_{k,\mbox{\tiny low}_j}f-S_{k-1,\mbox{\tiny low}_j}f\|_2^2=C\int_{E_j}|\e^{iv_k|\xi|^a}-\e^{iv_{k-1}|\xi|^a}|^2|\hat f(\xi)|^2\,d\xi \\
\quad\le C\int_{E_j} |v_k -v_{k-1}|^2 |\xi|^{2a}|\hat f(\xi)|^2\,d\xi\le C2^{-2j}2^{-2bj}\int_{E_j}|\xi|^{2a}|\hat f(\xi)|^2\, d\xi 
\end{array}
\]
and\[
\begin{array}{l}
\sum_{j\,} 2^{bj}\sum_{\overset{v_k\in\tilde A_j}{k>0}} \|S_{k,\mbox{\tiny low}_j}f-S_{k-1,\mbox{\tiny low}_j}f\|_2^2\\
\quad\le C \sum_j 2^{-2j}\left(2^{-bj}\sum_{\overset{v_k\in\tilde A_j}{k>0}}1\right) \int_{E_j}|\xi|^{2a}|\hat f(\xi)|^2\, d\xi \\
\le C\int\left( \sum_{2^{b_{\tiny1}j}\ge|\xi| }2^{-2j} \right) |\xi|^{2a}|\hat f(\xi)|^2\,d\xi.
\end{array}
\]
The inequality $2^{b_1j}\ge|\xi|$ implies $2^j\ge|\xi|^{1/b_1}$ and thus we get  \[\sum_{2^{b_1j}\ge|\xi|} 2^{-2j}\le C|\xi|^{-2/b_1}.\]
Hence the left hand side of (\ref{eq:lowsum}) is majorized by \[ C\int|\xi|^{2a-2/b_1} |\hat f(\xi)|^2\,d\xi.\]
We have $b=2s/(a-s)$ and $b_1=1/(a-s)$ and $2a-2/b_1=2a-2(a-s)=2s$ and the inequality (\ref{eq:lowsum}) follows.
\par
To prove  (\ref{eq:highsum}) we first observe that Plancherel's theorem implies \[
\|S_{k,\mbox{\tiny high}_j}f\|^2_2\le C\int_{|\xi|\ge 2^{b_1j}} |\hat f(\xi)|^2|,d\xi.
\]
and hence\[
\begin{array}{l}
\sum_j\sum_{v_k\in\tilde A_j }\|S_{k,\mbox{\tiny high}_j}f\|^2_2\le\sum_j2^{bj}\int_{|\xi|\ge2^{b_1j}}|\hat f(\xi)|^2\,d\xi\\
\quad=\int\left(\sum_{2^{b_1j\le|\xi|}} 2^{bj}\right) |\hat f(\xi)|^2\,d\xi\le C\int|\xi|^{b/b_1} |\hat f(\xi)|^2\,d\xi.
\end{array}.
\]
Since $b=2s /(a-s)$ and $b_1=1/(a-s)$ we obtain $b/b_1=2s$ and (\ref{eq:highsum}) follows.\\
Thus the proof of Theorem 3 is complete.
  \end{proof}
  \par
  We shall then prove the two corollaries to Theorem 3.
  \begin{proof}[Proof of Corollary 1]
  Since $\sum_1^\infty t_k^\gamma$  is convergent we obtain\[
  \left(\#\{k;t_k>2^{-j-1}\}\right)2^{(-j-1)\gamma} \le\sum_{t_k>2^{-j-1}} t^\gamma\le C
  \]
  an $\#A_j\le C2^{j\gamma}$ for $j=1,2,3,\dots $. Since $\gamma=2s/(a-s)$ the corollary follows from Theorem 3.
  \end{proof}
  \begin{proof}[Proof of Corollary 2 ]
  Assume that $f\in L_r^p$,  where $1<p<2$, amd $r>0$. Also let $s=n/2\,+\,r -n/p$. Then there exists $g\in L^p$ such that
  $f=\mathscr{J}_r(g)=\mathscr{J}_s(\mathscr{J}_{r-s}g)$ and we have\[
  \frac12=\frac1p-\frac{r-s}{n}.
  \]
  It follows from the Hardy-Littlewood-Sobolev theorem that $\mathscr{J}_{r-s}g\in L^2$ and hence $f\in H_s$ 
  (see Stein \cite{Ste70}. p. 119). The corollary then follows from Theorem 3.
  \end{proof}
  \section{Proofs of Theorem 4 and its corollaries}
  In Sections 4 and 5 we assume $n\ge1$ and $a>0$. We remark that (\ref{eq:limit}) holds almost everywhere if $f\in H_s$ and $n=1, 0<a<1$,
  and $s>a/4$ or $n\ge1, a=1$ and $s>1/2)$ (se Walther \cite{Wal94},\cite{Wal98}).\\
  Before proving Theorem 4 we need some preliminary estimates. We set $B(x_0;r)=\{x;|x-x_0|\le r \}$. Using the estimate\[
 |\e^{it|\xi|^a} -e^{iu|\xi|^a} |\le|t-u|\,|\xi|^a
  \]
  and with  $A\ge1$ and $\mbox{supp} \hat f\subset B(0;A)$  we obtain by Schwarz inequality
   \begin{align}
   \label{eq:infty_est}
  \begin{array}{l}
  \|S_tf-S_uf\|_\infty\le\int_{|\xi|\le A} |t-u|\,|\xi|^a\,|\hat f(\xi)|\,d\xi\\
  \quad\le|t-u|\left(\int_{|\xi|\le A} |\xi|^{2a}\,d\xi\right)^{1/2}\left(\int|\hat f(\xi)|^2\,d\xi \right)^{1/2}\\
  \quad\le C|t-u| \left(\int_0^A r^{2a+n-1}\,dr \right)^{1/2}\,\|f\|_2\le C|t-u|\,A^{a+n/2} \|f\|_2
  \end{array}
  \end{align}
  Now assume $T=\{t_j; j=0,1,2,\dots,N\}$ where $t_j\in\R$ and $t_{j-1}<t_j$. We shall prove that 
 that if  $\mbox{supp} \hat f\subset B(0;A)$ then
 \begin{align}
 \label{eq:interval_max}
 \int\max_{t,u\in T}|S_tf(x)-S_uf(x)|^2\,dx\le C \max_{t,u\in T} |t-u|^2A^{2a} \|f\|_2^2.
  \end{align}
 Using the Schwarz inequality we obtain \[
\begin{array}{l}
 \max_{t,u\in T}|S_tf(x)-S_uf(x)|\le\sum_1^N |S_{t_i}f(x)-S_{t_{i-1}}f(x)|\\
 \quad\le\sum_1^N |t_i-t_{i-1}|^{-1/2} |S_{t_i}f(x)-S_{t_{i-1}}f(x)|\,|t_i-t_{i-1}|^{1/2}\\
 \quad\le\left( \sum_1^N |t_i-t_{i-1}|^{-1} |S_{t_i}f(x)-S_{t_{i-1}}f(x)|^2\right)^{1/2}\left( \sum_1^N |t_i-t_{i-1}|\right)^{1/2}
 \end{array}
\] 
 where the last sum equals $ \max_{t,u\in T} |t-u|$, and the Plancherel theorem gives \[
 \begin{array}{l}
 \int\max_{t,u\in T}|S_tf(x)-S_uf(x)|^2\,dx\le \left(\max_{t,u\in T} |t-u|\right) \sum_1^N |t_i-t_{i-1}|^{-1} \int |S_{t_i}f(x)-S_{t_{i-1}}f(x)|^2\,dx\\
 \quad\le \left(\max_{t,u\in T} |t-u|\right) \sum_1^N |t_i-t_{i-1}|^{-1}\int|t_i-t_{i-1}|^2|\xi|^{2a} |\hat f(\xi)|^2\,d\xi\\
 \quad\le \left(\max_{t,u\in T} |t-u|\right)^2 \int|\xi|^{2a}|\hat f(\xi)|^2\,d\xi\le C \max_{t,u\in T} |t-u|^2A^{2a}\|f\|^2_2
\end{array}
 \]
 Hence (\ref{eq:interval_max}) is proved.
 \par
We shall then prove the following lemma
\begin{lemma}
Let $I$ denote an interval of length $r$ Then 
\begin{align}
\label{eq:interval_sup}
\int \sup_{t,u\in I}|S_tf(x)-S_uf(x)|^2\,dx\le Cr^2A^{2a}\| f\|_2^2
\end{align}
if $f\in L^2(\R^n)$ and $\mbox{supp }\hat f\subset B(0;A)$.
\end{lemma}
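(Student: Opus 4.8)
The plan is to reduce the continuous supremum over an interval $I$ of length $r$ to a countable supremum over a dense subset, and then apply the discrete estimate (\ref{eq:interval_max}) together with the uniform continuity afforded by (\ref{eq:infty_est}). First I would observe that, since $\mbox{supp }\hat f\subset B(0;A)$, the function $t\mapsto S_t f(x)$ is continuous in $t$ for every fixed $x$; indeed, from (\ref{eq:infty_est}) one has $\|S_t f - S_u f\|_\infty \le C|t-u|\,A^{a+n/2}\|f\|_2$, so $S_t f(x)$ depends continuously on $t$ uniformly in $x$. Consequently, if $D$ is a countable dense subset of $I$ (say the dyadic rationals in $I$), then
\[
\sup_{t,u\in I}|S_t f(x)-S_u f(x)| = \sup_{t,u\in D}|S_t f(x)-S_u f(x)|
\]
for every $x$, and the right-hand side is the increasing limit of $\max_{t,u\in T}|S_t f(x)-S_u f(x)|$ over finite subsets $T\subset D$.

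The main step is then to apply (\ref{eq:interval_max}) to each finite ordered set $T=\{t_0<t_1<\cdots<t_N\}\subset D\subset I$. That estimate gives
\[
\int \max_{t,u\in T}|S_t f(x)-S_u f(x)|^2\,dx \le C\,\Bigl(\max_{t,u\in T}|t-u|\Bigr)^2 A^{2a}\|f\|_2^2 \le C\,r^2 A^{2a}\|f\|_2^2,
\]
where the last inequality uses $\max_{t,u\in T}|t-u|\le l(I)=r$ since $T\subset I$. Note the bound $C r^2 A^{2a}\|f\|_2^2$ is uniform over all such finite $T$. Taking an increasing sequence of finite sets $T_k\uparrow D$ and applying the monotone convergence theorem to the nonnegative functions $x\mapsto \max_{t,u\in T_k}|S_t f(x)-S_u f(x)|^2$, whose pointwise increasing limit is $\sup_{t,u\in D}|S_t f(x)-S_u f(x)|^2 = \sup_{t,u\in I}|S_t f(x)-S_u f(x)|^2$, yields
\[
\int \sup_{t,u\in I}|S_t f(x)-S_u f(x)|^2\,dx \le C\,r^2 A^{2a}\|f\|_2^2,
\]
which is (\ref{eq:interval_sup}).

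The only slightly delicate point — and the one I would be most careful about — is the measurability and the interchange of supremum with the integral. The uniform estimate (\ref{eq:infty_est}) is exactly what guarantees that the continuous supremum over $I$ equals the supremum over the countable dense set $D$, so that $x\mapsto \sup_{t,u\in I}|S_t f(x)-S_u f(x)|$ is measurable (a countable supremum of measurable functions) and is realized as a monotone limit of the finite maxima to which (\ref{eq:interval_max}) directly applies. Once this reduction is in place the rest is just monotone convergence, so I do not expect any real obstacle beyond recording these routine measure-theoretic details.
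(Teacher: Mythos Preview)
Your proof is correct and follows essentially the same route as the paper: both reduce the continuous supremum to finite maxima via the uniform continuity estimate (\ref{eq:infty_est}), apply the discrete bound (\ref{eq:interval_max}), and pass to the limit. The only cosmetic difference is that the paper uses equally spaced grids $t_i=b+ir/N$ with an explicit error $C_f r/N$ and invokes Fatou's lemma, whereas you take an increasing sequence of finite sets exhausting a countable dense subset and use monotone convergence.
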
  
\begin{proof}[Proof of Lemma 3]
Asumme $I=[b,b+r]$ and let $N$ be a positive integer. Set $t_i=b+ir/N, i=0,1,2,\dots,N$, and $T=\{ t_i;i=0,1,2,\dots,N\}$.
We have\[
S_tf(x)-S_uf(x)=S_{t_i}f(x)-S_{t_j}f(x)+S_tf(x)-S_{t_i}f(x) -(S_uf(x)-S_{t_j}f(x)),
\]
where we choose $t_i$ close to $t$ and $t_j$ close to $u$. Invoking (\ref{eq:infty_est}) we obtain \[
|S_tf(x)-S_{t_i}f(x)|\le  C|t-t_i|\,A^{a+n/2} \|f\|_2\le C\frac{r}{N}A^{a+n/2} \|f\|_2=C_f \frac{r}{N},
\]
and\[
|S_uf(x)-S_{t_j}f(x)|\le  C|u-t_j|\,A^{a+n/2} \|f\|_2\le C\frac{r}{N}A^{a+n/2} \|f\|_2=C_f\frac{r}{N}.
\]
where $C_f$ depends on $f$.
It follows that\[
|S_tf(x)-S_uf(x)|\le \max_{i,j}|S_{t_i}f(x)-S_{t_j}f(x)|+C_f\frac{r}{N}.
\]
Setting $F_N(x)=\max_{i,j}|S_{t_i}f(x)-S_{t_j}f(x)|$ we obtain\[
|S_tf(x)-S_uf(x)|\le F_N(x)+C_f\frac{r}{N}
\]
Letting $N\rightarrow\infty$ we obtain\[
|S_tf(x)-S_uf(x)|\le \lowlim_{N\to\infty} F_N(x).
\]
An application of Fatou's lemma and the inequality (\ref{eq:interval_max}) then gives\[
\begin{array}{l}
\int\sup_{t,u\in I}|S_tf(x)-S_uf(x)|^2\,dx\le\int\lowlim_{N\to\infty}F_N(x)^2\,dx \\[.2cm]
\quad\le\lowlim_{N\to\infty}\int F_N(x)^2\,dx \le Cr^2 A^{2a}\|f\|^2_2
\end{array}
\]
and the lemma follows.
\end{proof}
\par
Let $I$ and $f$ have the properties in the above lemma. Then
\begin{align}
\label{eq:supI}
\int\sup_{t\in I}|S_tf(x)-f(x)|^2\,dx\le C\left( r^2A^{2a}+1 \right)\|f\|_2^2.
\end{align}
To prove (\ref{eq:supI}) we take $u_0\in I$ and observe that\[
\sup_{t\in I}|S_tf(x)-f(x)|\le \sup_{t\in I}|S_tf(x)-S_{u_0}f(x)|+|S_{u_0}f(x)|+|f(x)|
\]
and (\ref{eq:supI}) follows from Lemma 3 and Plancherel theorem.
\par
We shall then prove the following lemma
\begin{lemma}
Let $f$ have the same properties as in Lemma 3. Assume $r>0$ and set 
$I_l=[t_l-r/2,t_l+r/2], l=1,2,\dots,N$. Assume that $E$ is a set and $E\subset\bigcup_1^N I_l$. Then
\begin{align}
\label{eq:supE}
\int\sup_{t\in E}|S_tf(x)-f(x)|^2\,dx\le CN\left( r^2A^{2a}+1 \right)\|f\|_2^2.
\end{align}
\end{lemma}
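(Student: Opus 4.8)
The plan is to reduce the supremum over $E$ to a finite maximum over the covering intervals $I_l$ and then apply the one-interval estimate (\ref{eq:supI}). Since $E\subset\bigcup_{l=1}^N I_l$, every $t\in E$ lies in at least one $I_l$, so for each fixed $x$ one has the pointwise bound
\[
\sup_{t\in E}|S_tf(x)-f(x)|\le\max_{1\le l\le N}\,\sup_{t\in I_l}|S_tf(x)-f(x)|.
\]
Squaring and replacing the maximum by the sum gives
\[
\sup_{t\in E}|S_tf(x)-f(x)|^2\le\sum_{l=1}^N\,\sup_{t\in I_l}|S_tf(x)-f(x)|^2.
\]

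Next I would integrate this inequality in $x$ over $\R^n$ and apply (\ref{eq:supI}) to each term on the right. Each $I_l$ has length $r$, and $f$ has $\mbox{supp}\,\hat f\subset B(0;A)$, so (\ref{eq:supI}) yields
\[
\int\sup_{t\in I_l}|S_tf(x)-f(x)|^2\,dx\le C\bigl(r^2A^{2a}+1\bigr)\|f\|_2^2
\]
for every $l=1,2,\dots,N$, with $C$ independent of $l$. Summing over the $N$ intervals produces exactly the claimed bound $CN(r^2A^{2a}+1)\|f\|_2^2$, which completes the proof.

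Since this is a direct consequence of (\ref{eq:supI}), there is no real obstacle; the only point to keep in mind is that some $I_l$ may protrude below $0$, but this is harmless because Lemma 3 and (\ref{eq:supI}) were derived solely from the estimate $|\e^{it|\xi|^a}-\e^{iu|\xi|^a}|\le|t-u|\,|\xi|^a$, which is valid for all real $t,u$; alternatively one may simply replace each $I_l$ by $I_l\cap[0,\infty)$ without changing the covering property. The measurability of $\sup_{t\in E}|S_tf-f|$ is also not an issue here since $f\in\mathscr{S}$ makes $(x,t)\mapsto S_tf(x)$ continuous.
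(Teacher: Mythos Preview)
Your proof is correct and follows essentially the same approach as the paper: both bound $\sup_{t\in E}|S_tf(x)-f(x)|^2$ by $\sum_{l=1}^N\sup_{t\in I_l}|S_tf(x)-f(x)|^2$ and then apply (\ref{eq:supI}) term by term. Your additional remarks on negative $t$ and measurability are fine but not needed for the argument.
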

\begin{proof}[Poof of Lemma 4]
The lemma follows from the inequality (\ref{eq:supI}) and the inequality\[
\sup_{t\in E}|S_tf(x)-f(x)|^2 \le \sum_{l=1}^N \sup_{t\in I_l}|S_tf(x)-f(x)|^2
\]
\end{proof}
\par
Now assume $f\in\mathscr{S}$ and write\[
f=\sum_{k=0}^\infty f_k,
\]
where $\hat f_0$ is supported in $B(0;1)$ and $\hat f_k$ has support in $\{\xi;2^{k-1}\textcolor{black}{\le}|\xi|\le 2^k\}$
for $k=1,2,3,\dots$. We shall prove the following lemma
\begin{lemma}
Let $f\in\mathscr{S}$  and  $s>0$ and \textcolor{black}{and let $E$ be a bounded set in $\R$}. Then\[
\int\sup_{t\in E}|S_tf(x)-f(x)|^2\,dx\le C\|f\|_{H_s}^2\left(\sum_{k=0}^\infty N_E(2^{-ka})2^{-2ks}\right),
\]
where $N_E(r)$ for $r>0$ denotes the minimal number $N$ of intervals $I_l, l=1,2,\dots,N$, of length $r$
such that $E\subset\sum_1^N I_l$.
\end{lemma}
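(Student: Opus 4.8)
The plan is to prove the estimate on each Littlewood--Paley piece $f_k$ separately, using the single-frequency maximal inequality of Lemma 4 with the interval length $r$ chosen so that the two terms $r^2A^{2a}$ and $1$ inside the parentheses there become comparable, and then to sum over $k$ by means of a weighted Cauchy--Schwarz inequality. First, fix $k$. Since $\widehat{f_k}$ is supported in $B(0;2^k)$, Lemma 4 applies with $A=2^k$; taking $r=2^{-ka}$ gives $r^2A^{2a}=1$, and covering $E$ by the minimal number $N_E(2^{-ka})$ of intervals of that length yields
\[
\int \sup_{t\in E}|S_tf_k(x)-f_k(x)|^2\,dx \;\le\; C\,N_E(2^{-ka})\,\|f_k\|_2^2 .
\]
Because $|\xi|\ge 2^{k-1}$ on the support of $\widehat{f_k}$ when $k\ge1$ (and trivially when $k=0$), one has $\|f_k\|_2^2 \le C\,2^{-2ks}\|f_k\|_{H_s}^2$, so with $b_k:=N_E(2^{-ka})\,2^{-2ks}$ the displayed inequality becomes $\int \sup_{t\in E}|S_tf_k(x)-f_k(x)|^2\,dx \le C\,b_k\|f_k\|_{H_s}^2$.

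Next I would reassemble the pieces. Put $\sigma=\sum_{k\ge0}b_k$; if $\sigma=\infty$ there is nothing to prove, so assume $\sigma<\infty$. Since $f\in\mathscr S$ the expansions $f=\sum_k f_k$ and $S_tf=\sum_k S_tf_k$ converge absolutely and uniformly in $(x,t)$ (the integrand is dominated by $|\widehat f|\in L^1$), hence pointwise
\[
\sup_{t\in E}|S_tf(x)-f(x)| \;\le\; \sum_{k\ge0}\sup_{t\in E}|S_tf_k(x)-f_k(x)| .
\]
Applying Cauchy--Schwarz with the weights $b_k>0$,
\[
\Bigl(\sum_{k}\sup_{t\in E}|S_tf_k(x)-f_k(x)|\Bigr)^2 \;\le\; \Bigl(\sum_k b_k\Bigr)\sum_k b_k^{-1}\sup_{t\in E}|S_tf_k(x)-f_k(x)|^2 ,
\]
and integrating in $x$, using the single-frequency bound and the fact that $\sum_k\|f_k\|_{H_s}^2\le C\|f\|_{H_s}^2$ (a standard consequence of the near-disjointness of the Fourier supports), I obtain
\[
\int \sup_{t\in E}|S_tf(x)-f(x)|^2\,dx \;\le\; C\,\sigma\sum_k b_k^{-1}b_k\|f_k\|_{H_s}^2 \;=\; C\,\sigma\sum_k\|f_k\|_{H_s}^2 \;\le\; C\,\sigma\,\|f\|_{H_s}^2 ,
\]
which is exactly the asserted bound since $\sigma=\sum_{k\ge0}N_E(2^{-ka})2^{-2ks}$.

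I do not expect a genuine obstacle here; the argument is essentially routine once two decisions are made. The first is the balancing choice $r=2^{-ka}$ when invoking Lemma 4, which turns the parenthetical factor into a harmless constant. The second is the observation that the correct weights in the Cauchy--Schwarz step are the summands $b_k=N_E(2^{-ka})2^{-2ks}$ themselves, so that the factor $b_k^{-1}$ precisely cancels the $b_k$ produced by the single-frequency estimate and the emerging constant is just $\sigma$. The only remaining point to watch is the termwise splitting of $S_tf-f$ and the interchange of sum and integral, but both are immediate for $f\in\mathscr S$ because of the rapid decay of $\widehat f$.
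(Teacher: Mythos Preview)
Your proof is correct and follows essentially the same approach as the paper's: the same Littlewood--Paley decomposition, the same balancing choice $r=2^{-ka}$, $A=2^k$ in Lemma~4, and the same weighted Cauchy--Schwarz with weights $b_k=N_E(2^{-ka})2^{-2ks}$. The only cosmetic difference is ordering: the paper applies Cauchy--Schwarz first and then invokes Lemma~4 inside the resulting sum, whereas you bound each piece first and then reassemble; the arithmetic is identical.
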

\begin{proof}[Proof of Lemma 5]
With real numbers $g_k>0, k=0,1,2,\dots,$ we have\[
\begin{array}{l}
\sup_{t\in E} |S_t f(x)-f(x)|\le\sum_{k=0}^\infty \sup_{t\in E} |S_t f_k((x)-f_k(x)|\\[.2cm]
\quad=\sum_{k=0}^\infty  g_k^{-1/2} \sup_{t\in E} |S_t f(_kx)-f_k(x)| g_k^{1/2}\\[.2cm]
\le \left(\sum_{k=0}^\infty  g_k^{-1} \sup_{t\in E} |S_t f_k(x)-f_k(x)|^2 \right)^{1/2}  \left(\sum_{k=0}^\infty  g_k\right)^{1/2}
\end{array}
\] 
and invoking Lemma 4 with $r=2^{-ka}$ and $A=2^k$ we obtain\[
\int\sup_{t\in E}|S_tf(x)-f(x)|^2\,dx\le \left(\sum_{k=0}^\infty  g_k\right)\left(\sum_{k=0}^\infty  g_k^{-1} CN_E(2^{-ka})(2^{-2ak}2^{2ak}+1)\|f_k\|_2^2
\right)
\]
Choosing $g_k=N_E(2^{-ka})2^{-2ks}$ one obtains\[
\begin{array}{ll}
\int\sup_{t\in E}|S_tf(x)-f(x)|^2\,dx&\le C\left(\sum_{k=0}^\infty  g_k\right)\left( \sum_0^\infty 2^{2ks}\|f_k\|_2^2 \right)\\[.2cm]
&\le C\left( \sum_0^\infty N_E(2^{-ka})2^{-2ks}\right)\|f\|_{H_s}^2
\end{array}
\]
and the proof of the lemma is complete.
\end{proof}
\par
We shall prove Theorem 4.
\begin{proof}[Proof of Theorem 4]
Let $m$ take the values $0,1,2,\dots$. If
\begin{align}
\label{eq:m2k}
2^{-m-1}<2^{-ka}\le2^{-m}
\end{align}
for some integer $k\ge0$ then\[
N_E(2^{-ka})\le C N_E(2^{-m})
\]
and since $a>0$ there is \textcolor{black}{for any fixed $m$} only a bounded number of values of $k$ for which (\ref{eq:m2k}) holds. It follows that\[
N_E(2^{-ka})2^{-2ks} \le C N_E(2^{-m})2^{-2ms/a}.
\]
Combining this inequality with the estimate\[
\sup_{t\in E}|S_tf(x)|\le\sup_{t\in E}|S_tf(x)-f(x)|   +|f(x)|
\]
one obtains the theorem from Lemma 5
\end{proof}
\par 
Corollary 3 follows directly from Theorem 4 and we shall then prove Corollary 4.
\begin{proof}[Proof of Corollary 4]
Set $E_0=E\cup\{0\}$ and\[
 S_0^* f(x)=\sup_{E_0}|S_tf(x)|,\, x\in\R^n.
\]
It then follows from Corollary 3 that  for $f\in\mathscr{S}$ one has\[
\|S_0^*f\|_2\le C\|f\|_{H_s}.
\]
It follows that for every cube $I$ in $\R^n$ one has\[
\int_I S^*_0f(x)\,dx\le C_I \| f\|_{H_s}, f\in\mathscr{S}.
\]
Now fix $f\in H_s$ and a cube $I$. Then there exists 
 { \color{black}
a sequence $(f_j)_1^\infty$ such that $f_j\in C_0^\infty$ and\[
\|f_j-f\|_{H_s}<2^{-j},\, j=1,2,3,\dots .
\]
One then has $\|f_j-f_{j+1}\|_{H_s}<2\cdot2^{-j}$ and \[
\int_I \sup_{t\in E_0} |S_t f_j(x)-S_t f_{j+1}(x)|\,dx \le C2^{-j}.
\]
}
Hence
\begin{align}
\label{eq:sup*finite}
\sum^\infty_1 \sup_{t\in E_0}|S_t f_j(x)-S_t f_{j+1}(x)| <\infty
\end{align}
for almost every $x\in I$.\\
Then choose $x$ so that (\ref{eq:sup*finite}) holds. It follow that $S_t f_j(x)\rightarrow u_x(t)$, as $j\to\infty$, uniformly in $t\in E_0$,
where $u_x$ is a continuous function on $E_0$.\\
It is also clear that $S_tf_j\rightarrow S_t f$ in $L^2$ as $j\to\infty$, for every $t\in E_0$. Since $E_0$ is countable we can find a subsequence
$(f_{j_l})_1^\infty$ such that for almost every $x$ $S_t f_{j_l}\rightarrow S_t f(x)$ for all $t\in E_0$.\\
It follows that for almost every $x\in I$ one has $S_t f(x)=u_x(t)$ for all $t\in E_0$. Since \[ 
\lim_{\overset{t\to0}{t\in E}} u_x(t)=u_x(0)
\]
almost everywhere one also has\[
\lim_{\overset{t\to0}{t\in E}} S_t f(x)=f(x) 
\]
for almost every $x\in I$. Since $I$ is arbitrary it follows that (\ref{eq:seqlimit}) holds almost everywhere in $\R^n$.
\end{proof}
\section{Proofs of Theorems 5 and 6  and  Corollaries 6 and 7}
We shall first give the proof of Theorem 5
\begin{proof}[Proof of Theorem 5]
It follows from Corollary 3 that\[
\|S^*f\|_2\le C\|f\|_{H_s},\quad f\in\mathscr{S},
\]
where\[
S^*f(x)=\sup_{t\in E}|S_t f(x)|, x\in\R^n, f\in\mathscr{S}.
\]
Now take $f\in H_s$.\\
Let $I$ denote a cube in $R^n$. It follows that \(\int_I S^*f(x)\,dx\le C_I \|f\|_{H_s}\) for $f\in C^\infty_0$.\\
We choose {\color{black}
a sequence $(f_j)_1^\infty$ such that $f_j\in C_0^\infty$ and\[
\|f_j-f\|_{H_s}<2^{-j},\, j=1,2,3,\dots .
\]
One then has $\|f_j-f_{j+1}\|_{H_s}<C 2^{-j}$ and \[
\int \sup_{t\in E} |S_t f_j(x)-S_t f_{j+1}(x)|\,dx \le C2^{-j}.
\]
}
It follows that \[
\sum^\infty_1 \sup_{t\in E}|S_t f_j(x)-S_t f_{j+1}(x)| <\infty
\]
for almost every $x\in I$. Now choose $x$ such that the above inequality holds. We conclude that $S_t f_j(x)\rightarrow u_x(t)$, as $j\to\infty$, uniformly in $t\in E$, where $u_x$ is a continuous function on $E$.\\
On the other hand $S_tf_j\rightarrow S_t f$ in $L^2(\R^n\times E; m\times m_\kappa)$ as $j\to\infty$. Hence there is a subsequence
$(f_{j_l})_1^\infty$ such that $S_t f_{j_l}(x)\rightarrow S_t f(x)$ almost everywhere in  $\R^n\times E$ with respect to $m\times m_\kappa$.
It follows that for almost every $x\in I$ one has $S_t f(x)=u_x(t)$ for almost  all $t\in E$ with respect to $m_\kappa$. We have \[
\lim_{\overset{t\to0}{t\in E}} u_x(t)=f(x) 
\]
for almost every $x\in I$ and it follows that for almost every $x\in I$ we can modify $S_t f(x)$ on a $m_\kappa$-nullset so that\[
\lim_{\overset{t\to0}{t\in E}} S_t f(x)=f(x). 
\]
This completes the proof of Theorem 5.
\end{proof}
For the proof of Corollary 6 we need the following lemma
\begin{lemma}
Let $A_j$ be defined as in Theorem 3  satisfying\[
 \#A_j\le C 2^{bj}\mbox{ for } j=0,1,2,\dots 
\] 
for some $b>0$. Let $E=\bigcup_1^\infty A_j$ and $N_E$ be as above  then\[ 
N_E(2^{-m})\le C 2^{bm/(b+1)}
\]
\end{lemma}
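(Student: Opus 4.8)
The plan is a balanced covering argument: split $E$ at a scale $2^{-j_0}$ chosen to equalise two competing costs, namely covering the few elements of $E$ lying above $2^{-j_0}$ one interval at a time, against covering the (possibly many) elements lying below $2^{-j_0}$ all at once by tiling the short interval $(0,2^{-j_0}]$.

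Fix an integer $m\ge0$. Recall that each element of $A_j$ lies in $(2^{-j-1},2^{-j}]$ and that $\#A_j\le C2^{bj}$. Let $j_0=\lfloor m/(b+1)\rfloor$, so that $0\le j_0\le m$, and write $E=E'\cup E''$ with $E'=\bigcup_{1\le j\le j_0}A_j$ and $E''=\bigcup_{j>j_0}A_j$. For the coarse part I would bound the covering number simply by the cardinality:
\[
N_{E'}(2^{-m})\le\#E'\le\sum_{j=1}^{j_0}\#A_j\le C\sum_{j=1}^{j_0}2^{bj}\le C_b\,2^{bj_0},
\]
the geometric series converging precisely because $b>0$. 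For the fine part I would note that $E''\subset(0,2^{-j_0}]$, an interval of length $2^{-j_0}$, which is tiled by exactly $2^{m-j_0}$ intervals of length $2^{-m}$; hence $N_{E''}(2^{-m})\le2^{m-j_0}$.

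Adding these and using subadditivity of covering numbers gives $N_E(2^{-m})\le C_b2^{bj_0}+2^{m-j_0}$. Since $j_0\le m/(b+1)$ we have $2^{bj_0}\le2^{bm/(b+1)}$, and since $j_0>m/(b+1)-1$ we have $m-j_0<bm/(b+1)+1$, so $2^{m-j_0}\le2\cdot2^{bm/(b+1)}$. Therefore $N_E(2^{-m})\le(C_b+2)2^{bm/(b+1)}$, which is the claim, with a constant depending only on $b$ and on the constant $C$ in the hypothesis. I do not expect any genuine difficulty here: the only things to watch are the elementary optimisation that produces the exponent $b/(b+1)$ and the floor-function bookkeeping, both routine; in particular no separate treatment of small $m$ is needed, since $j_0=0$ forces $m<b+1$ and the asserted bound then holds trivially after enlarging the constant.
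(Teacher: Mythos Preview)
Your argument is correct and is essentially the paper's own proof: both split $E$ at a dyadic level $k\approx m/(b+1)$, cover the coarse part by counting points via $\sum_{j\le k}\#A_j\lesssim 2^{bk}$, cover the fine part by tiling $(0,2^{-k}]$ with $\sim 2^{m-k}$ intervals of length $2^{-m}$, and then balance the two terms. The only cosmetic difference is that the paper takes $k=\lfloor (m+1)/(b+1)\rfloor$ rather than your $j_0=\lfloor m/(b+1)\rfloor$, which is immaterial.
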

\begin{proof}[Proof of Lemma 6]
Fix a $k$. We have\[
\#(\left(\bigcup_1^k A_j\right)\le C \sum_{j=1}^k 2^{bj} \le C 2^{bk}
\]
and  $\bigcup_{j=k+1 }^\infty A_j \subset \{t; 0\le t \le 2^{-k-1} \}$,
which can be covered by $2^{m-k+1}$ intervals of length $2^{-m}$. Thus \[
N_E(2^{-m}) \le 2^{m-k+1}  + C2^{bk}  \]
Choose $k$   such that  $k \le (m+1)/(b+1)<k+1$ 
We get $2^{b+1} \cdot 2^{(b+1)k} > 2^{m+1}$  and $2^{bk} \le C 2^{mb/(b+1)}$. We conclude that\[
N_E(2^{-m}) \le C 2^{bk}\le C 2^{bm/(b+1)}.
\]
This ends the proof of the Lemma 6
 \end{proof}
 We can now prove Corollary 6 by using Lemma 6 and Corollary 4
 \begin{proof} [Proof of Corollary 6]
 With $1/b>(a-2s)/2s$ as in Corollary 6 we get \[
 b/(b+1)=\frac1{(1+ 1/b)} <1 / \left(1+ \frac{a-2s}{2s}\right) = 2s/a,
 \]
and we get\[ 
\sum_1^\infty N_E(2^{-m}) 2^{-2ms/a} \le C \sum_1^\infty 2^{bm/(b+1)} 2^{-2ms/a}  \le C \sum_1^\infty 2^{m(b/(b+1) - 2s/a}) <\infty
\]
since $b/(b+1) - 2s/a<0$.\\
By Corollary 4  the Corollary 6 will follow.
 \end{proof}
The Corollary 7 will now follow by similar arguments as in the proof  Corollary 1.
\par
Finally we shall give the proof of Theorem 6.
\begin{proof}[Proof of Theorem 6]
We shall use Theorem 5 with \[ \kappa=\log 2 /(\log 1/\lambda ).
\]
For $k=0,1,2,3,\dots,$ $C(\lambda)$ can be covered by $2^k$ intervals of length $\lambda^k$\\
Let $m$ be a positive integer. Choose $k$ such that $\lambda^{k+1}<2^{-m}\le\lambda^k$.
It follows that $N_E(2^{-m}) \le  2^{k+1}$ and that \[
\left(1/\lambda\right )^k\le2^m
\] 
and \[
k\le m \frac{\log 2}{\log(1/\lambda)} =\kappa m.
\]
Hence\[
\sum_{m=1}^\infty N_E(2^{-m}) 2^{-2sm/a} \le C \sum_{m=1}^\infty 2^{\kappa m}2^{-2sm/a} < \infty,
\]
if $\kappa -2s/a<0$, i.e. $s>a\kappa/2$.
Theorem 6 follows from an application of Theorem 5.
\end{proof}
\section{Proof of Theorem 7}
We first assume $n=1$ and $a>1$. We choose a function $\varphi\in C^\infty_0(\R)$ with the property that $\varphi(\xi)=1$ for $|\xi|=a^{-1/(a-1)}$ 
and also $\varphi\ge0$. We also assume that there exists a constant $A>1$ such that $\supp  \varphi \subset \{\xi\in\R;1/A\le|\xi|\le A\}$. We then define a function $f_\nu$ by setting $\hat f_\nu(\xi)=\varphi\left(2^{-\nu}\xi\right)$ where $\nu=1,2,3,\dots$. One then has \[
\|f_\nu\|_2=c\|\hat f_\nu\|_2=c\left(\int|\varphi(2^{-\nu}\xi)|^2\,d\xi\right)^{1/2}=c\left(\int|\varphi(\eta)|^2\,d\eta\,2^\nu\right)^{1/2}=c2^{\nu/2},
\]
where $c$ denotes positive constants. Setting $\eta=2^{-\nu}\xi$ we also obtain\[
S_tf_\nu(x)=c\int \e^{i\xi x}\e^{it|\xi|^a}\varphi(2^{-\nu}\xi)\,d\xi=c2^\nu\int\e^{i2^\nu\eta x}\e^{it2^{\nu a}|\eta|^a}\varphi(\eta)\,d\eta=c2^\nu\int\e^{iF(\xi)}\varphi(\xi)\,d\xi,
\]
where $F(\xi)=t2^{\nu a}|\xi|^a+2^\nu x\xi$.\\
We then assume $C2^{-\nu}\le x\le1$ where $C$ denotes a large positive constant. It is clear that $F=G+H$, where \[
G(\xi)=2^\nu x|\xi|^a+2^\nu x\xi\]
and\[
H(\xi)=t2^{\nu a}|\xi|^a -2^\nu x|\xi|^a. 
\]
We shall first study the integral  \[
\int \e^{iG(\xi)}\varphi(\xi)\,d\xi=\int \e^{i2^\nu xK(\xi)}\varphi(\xi)\,d\xi,
\]
where $K(\xi)=|\xi|^a+\xi$ for $\xi\in\R$.\\
For $\xi>0$ we have $K^\prime(\xi)=a\xi^{a-1}+1$ and for $\xi<0$ one has $K^\prime(\xi)=1-a|\xi|^{a-1}$. It follows that $K^\prime(\xi)=0$ for $\xi=-a^{-1/(a-1)}$. Also $K^{\prime\prime}(\xi)\ne0$
for $\xi\in\supp \varphi$. 
 We now apply the method of stationary phase (se Stein \cite{Ste93}, p. 334). One  obtains\[
 \left|\int \e^{iG}\varphi\,d\xi\right| \gtrsim (2^\nu x)^{-1/2}=2^{-\nu/2}x^{-1/2.}
  \]
 Hence
 \begin{align}
  \left|\int \e^{iF}\varphi\,d\xi\right| &=\left|\int \e^{i(G+H)}\varphi\,d\xi\right|=\left|\int \e^{iG}\varphi\,d\xi + \int (\e^{iG+H}-\e^{iG})\varphi\,d\xi\right| \nonumber \\[.2cm]
  &\gtrsim2^{-\nu/2}x^{-1/2}- O\left( \int \left|\e^{iH}-1\right|\varphi\,d\xi \right) \ge 2^{-\nu/2}x^{-1/2}- O\left( \int |H|\varphi\,d\xi\right), 
 \label{eq:GH}
 \end{align}
 and we need an estimate of $H$. One obtains \[
 |H(\xi)|=|t2^{\nu a}-2^\nu x||\xi|^a\lesssim |t2^{\nu a}-2^\nu x|
 \]
 on $\supp \varphi$. We then choose $k$ such that\[
 t_{k+1}< \frac{2^\nu x}{2^{\nu a} }\le t_k
 \]
 where  we assume that $\nu$ is large. It follows that \[
 t_k\le2\frac{2^\nu x}{2^{\nu a}}\le2\frac{2^\nu}{2^{\nu a}}=2\cdot 2^{\nu(1-a)}
 \]
 and hence \[
 \log k \ge\frac12 2^{\nu(a-1)}\ge2^{\nu\epsilon}
 \]
where $\epsilon>0$. It is then easy to see that \[
k\ge\e^{2^{\nu\epsilon}}
\]
and\[
t_k-t_{k+1}\le\frac1k\le\e^{-2^{\nu\epsilon}}
\]
which implies that\[
\left|t_k-\frac{2^\nu x}{2^{\nu a}}\right|\le t_k-t_{k+1}\le\e^{-2^{\nu\epsilon}}
\]
We conclude that \[
\left|t_k2^{\nu a}-2^\nu x\right|\le2^{\nu a}\e^{-2^{\nu\epsilon}}\e^{-100\nu} 
\]
for $\nu$ large.\\
Setting $t=t_k$, invoking the inequality  (\ref{eq:GH}), and using the fact that $x\le1$, one obtains\[
 \left|\int \e^{iF}\varphi\,d\xi\right| \gtrsim2^{-v/2}x^{-1/2}-O\left(\e^{-100\nu} \right)\gtrsim2^{-v/2}x^{-1/2}.
\]
It follows that\[
\int|S^*f(x)|^2\,dx\gtrsim\int_{C2^{-\nu}}^1 2^\nu \frac1x\,dx \gtrsim2^\nu\nu
\]
for $\nu$ large.\\
We have $\|f_\nu\|_2=c2^{\nu/2}$ and we have proved that $\|S^*f_\nu\|_2\gtrsim2^{\nu/2}\nu^{1/2}$ and it follows that
$S^*$ is not a bounded operator on $L^2(\R)$.\\
\par
We shall then study the case $n\ge2$ and $a=2$.
We let $\varphi\in C^\infty_0(\R)$ be the same function as in the case $n=1$. Also let $\psi\in C^\infty_0(\R^{n-1})$ and assume that $\|\psi\|_2>0$.\\
For $x\in \R^n$ we write $x=(x_1,x^\prime)$, where $x^\prime=(x_2,x_2,\dots,x_n)$. We define $f_\nu$ by setting $\hat f\nu(\xi)=\varphi(2^{-\nu}\xi_1)\psi(\xi^\prime)$
for $v=1,2,3,\dots$ . \\
It is then easy to see that $\|f_\nu\|_2=c2^{\nu/2}$ for some constant $c$.\\
We also have 
\begin{align}
S_tf_\nu(x)&=c \int_\R\int_{\R^{n-1}}\e^{i(\xi_1x_1+\xi^\prime\cdot x^\prime)}\e^{it(\xi_1^2+|\xi^\prime|^2)}\varphi(2^{-\nu} \xi_1)\psi(\xi^\prime)\,d\xi_1d\xi^\prime\nonumber\\
&=\int_\R \e^{i(\xi_1x_1+t\xi_1^2)}\varphi(2^{-\nu}\xi_1)\,d\xi_1  \int_{\R^{n-1}}\e^{i(\xi^\prime\cdot x^\prime+t|\xi^\prime|^2 )} \psi(\xi^\prime)\,d\xi^\prime \nonumber,        
\end{align}
where $c$ denotes a constant.
Setting $\eta_1=2^{-\nu}\xi_1$ we obtain\[
S_tf_\nu(x)=c2^\nu\left(\int_\R \e^{i(t2^{2\nu}\eta_1^2+2^\nu\eta_1x_1)}\varphi(\eta_1)\,d\eta_1\right)\left(\int_{\R^{n-1}}\e^{i(\xi^\prime\cdot x^\prime+t|\xi^\prime|^2 )} \psi(\xi^\prime)\,d\xi^\prime\right).
\]
We then choose $t_k$ as an approximation for $\frac{2^\nu x_1}{2^{\nu a }}$ as in the one-dimensional case and set $t(x_1)=t_k$. It follows that \[
S_{t(x_1)}f_\nu(x)=c2^\nu I(x_1)\,J(x_1,x^\prime)
\]
where \[
I(x_1)=\int_\R \e^{i(t(x_1)2^{2\nu}\eta_1^2+2^\nu\eta_1x_1)}\varphi(\eta_1)\,d\eta_1
\]
and\[
J(x_1,x^\prime)=\int_{\R^{n-1}}\e^{i(\xi^\prime\cdot x^\prime+t(x_1)|\xi^\prime|^2 )} \psi(\xi^\prime)\,d\xi^\prime.
\]
Above we proved that $|I(x_1)|\gtrsim 2^{-\nu/2} x_1^{-1/2}$ for $C2^{-\nu}\le x_1\le1$. We also have \[
S^*f_\nu(x)\gtrsim2^\nu |I(x_1)|\,|J(x_1,x^\prime)|.\]
It follows that \[
\int_{\R^{n-1}} \left( S^*f_\nu(x)\right)^2\,dx^\prime\gtrsim2^{2\nu}|I(x_1)|^2 \int_{\R^{n-1}} |J(x_1,x^\prime)|^2\,dx^\prime,
\]
and invoking Plancherel's theorem we obtain
\begin{align}
\int_{\R^{n-1}} \left( S^*f_\nu(x)\right)^2\,dx^\prime\gtrsim2^{2\nu}|I(x_1)|^2 \int_{\R^{n-1}}|\psi(\xi^\prime)|^2\,d\xi^\prime
\nonumber\\
=c2^{2\nu}|I(x_1)|^2\gtrsim 2^{2\nu}2^{-\nu}x_1^{-1}=2^\nu x_1^{-1}\nonumber
\end{align}
for $C2^{-\nu}\le x_1\le1$.\\
We conclude that\[
\int_\R\int_{\R^{n-1}} \left( S^*f_\nu(x)\right)^2\,dx_1\,dx^\prime\gtrsim2^\nu\int_{C2^{-\nu}}^11/x_1\, dx_1\gtrsim2^\nu\nu
\]
and\[
\| S^*f_\nu\|_2\gtrsim2^{\nu/2}\nu^{1/2}.
\]
Since $\|f_\nu\|_2=c2^{\nu/2}$ it follows that $S^*$ is not a bounded operator on $L^2(\R^n)$.

\Addresses
\end{document}